\theoremstyle{plain}
\newtheorem{thm}{Theorem}[section]
\newtheorem{main}{Theorem}\renewcommand{\themain}{\Alph{main}}
\newtheorem{lem}[thm]{Lemma}
\newtheorem{prop}[thm]{Proposition}
\newtheorem{cor}[thm]{Corollary}
\newtheorem{fact}[thm]{Fact}
\theoremstyle{definition}
\newtheorem{defn}[thm]{Definition}
\newtheorem{exmp}[thm]{Example}
\newtheorem{rem}[thm]{Remark}
\newcommand{\size}[1]{|#1|}
\newcommand{\wt}{\widetilde}
\newcommand{\onto}{\twoheadrightarrow}
\newcommand{\into}{\hookrightarrow}
\newcommand{\figsize}{\footnotesize}
\newcommand{\Z}{\mathbb{Z}}
\newcommand{\R}{\mathbb{R}}
\newcommand{\C}{\mathbb{C}}
\newcommand{\quat}{\mathbb{H}}
\newcommand{\sph}{\mathbb{S}}
\newcommand{\fix}{\textsc{Fix}}
\newcommand{\aff}{\textsc{Aff}}
\newcommand{\isom}{\textsc{Isom}}
\newcommand{\spin}{\textsc{Spin}}
\newcommand{\symp}{\textsc{Sp}}
\newcommand{\unt}{\textsc{U}}
\newcommand{\sunt}{\textsc{SU}}
\newcommand{\orth}{\textsc{O}}
\newcommand{\sorth}{\textsc{SO}}
\newcommand{\refl}{\textsc{Refl}}
\newcommand{\cox}{\textsc{Cox}}
\newcommand{\sym}{\textsc{Sym}}
\newcommand{\braid}{\textsc{Braid}}
\newcommand{\cat}{\textsc{CAT}}
\newcommand{\norm}{\textsc{Norm}}
\newcommand{\real}{\textsc{Real}}
\newcommand{\imag}{\textsc{Imag}}
\tikzstyle{RedSolid}=[thick,red,-latex]
\tikzstyle{RedDotted}=[thick,dotted,red!50,-latex,>=stealth, bend right=5]
\tikzstyle{BlackDotted}=[thick,dotted,black!50, bend right=5]
\tikzstyle{BlueDotted}=[thick,dotted,blue!50!white, bend right=5]
\tikzstyle{GreenPoly}=[color=green!30,fill=green!30,join=bevel]
\def\greyA{black!50}  
\def\greenA{black!30!green!30!white}
\definecolor{br1}{rgb}{.90,0.2,0.1}
\definecolor{bb1}{rgb}{0.1,0.2,.70}
\definecolor{bg1}{rgb}{0.1,0.7,0.2}
\definecolor{bo1}{rgb}{.9,.6,.1}
\begin{document}

\title[An elegant complement complex]{A complex euclidean reflection
  group with an elegant complement complex} \date{\today}

\author{Ben Cot\'e}
\address{Department of Mathematics, Bridgewater State University,  Bridgewater, MA 02324} 
\email{cote.bn@gmail.com}

\author{Jon McCammond}
\address{Department of Mathematics, University of California, Santa Barbara, CA 93106} 
\email{jon.mccammond@math.ucsb.edu}

\subjclass[2010]{}

\keywords{Complex euclidean reflection group, hyperplane complement,
  Salvetti complex, non-positive curvature, braid group of a group
  action}

\begin{abstract}
  The complement of a hyperplane arrangement in $\C^n$ deformation
  retracts onto an $n$-dimensional cell complex, but the known
  procedures only apply to complexifications of real arrangements
  (Salvetti) or the cell complex produced depends on an initial choice
  of coordinates (Bj\"orner-Ziegler).  In this article we consider the
  unique complex euclidean reflection group acting cocompactly by
  isometries on $\C^2$ whose linear part is the finite complex
  reflection group known as $G_4$ in the Shephard-Todd classification
  and we construct a choice-free deformation retraction from its
  hyperplane complement onto an elegant $2$-dimensional complex $K$
  where every $2$-cell is a euclidean equilateral triangle and every
  vertex link is a M\"obius-Kantor graph.  Since $K$ is non-positively
  curved, the corresponding braid group is a $\cat(0)$ group, despite
  the fact that there are non-regular points in the hyperplane
  complement, the action of the reflection group on $K$ is not free,
  and the braid group is not torsion-free.
\end{abstract}
\maketitle

\section*{Introduction}

The complement of a hyperplane arrangement in $\C^n$ is obtained by
removing the union of its hyperplanes.  When the arrangement under
consideration is a complexified version of a real arrangement, there
is a classical construction due to Salvetti that provides a
deformation retraction onto an $n$-dimensional cell complex now known
as the \emph{Salvetti complex} of the arrangement
\cite{Sa87}. Bj\"orner and Ziegler extended Salvetti's construction so
that it works for an arbitrary complex hyperplane arrangement, but
their construction depends on an initial choice of a coordinate system
\cite{BjZi92}.  In this article we deformation retract the complement
of a specific infinite affine hyperplane arrangement in $\C^2$ onto an
elegant $2$-dimensional piecewise euclidean complex that involves no
choices along the way.  The arrangement we consider is the set of
hyperplanes for the reflections in a complex euclidean reflection
group that we denote $\refl(\wt G_4)$.  This is the unique complex
euclidean reflection group acting cocompactly by isometries on $\C^2$
whose linear part is the finite complex reflection group known as
$G_4$ in the Shephard-Todd classification.

\begin{main}[Complement complex]\label{main:complement}
  The hyperplane complement of $\refl(\wt G_4)$ deformation retracts
  onto a non-positively curved piecewise euclidean $2$-complex $K$ in
  which every $2$-cell is an equilateral triangle and every vertex
  link is a M\"obius-Kantor graph.
\end{main}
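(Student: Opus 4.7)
The plan is to describe the complex $K$ explicitly, verify its local structure and non-positive curvature from the combinatorics of the $G_4$-action, and then exhibit a canonical deformation retraction of the hyperplane complement onto $K$.

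For the construction of $K$, I would take as its vertex set the orbit of a point with maximal pointwise stabilizer in $\refl(\wt G_4)$. Since $\refl(\wt G_4)$ acts cocompactly on $\C^2$, its translation lattice permutes the points whose full stabilizer is a conjugate of the finite group $G_4$; this single orbit is the vertex set. Edges of $K$ would be canonically chosen geodesic segments between nearest such vertices, and $2$-cells would be equilateral Euclidean triangles spanned by compatible triples of such edges. Because the translation lattice, the pointwise stabilizers, and the Euclidean metric are all intrinsic data of $\refl(\wt G_4)$, no coordinate choices enter this construction.

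Next I would analyze a vertex link. At a special vertex $v$ the stabilizer $G_4$ acts on the small sphere around $v$ and on the link of $v$ in $K$. A direct count would show that this link is a $3$-regular, bipartite, $G_4$-symmetric graph with $8$ vertices and $12$ edges (matching $24/3 = 8$ edges and $24/2 = 12$ triangles at $v$ under the $G_4$-action), and that it has girth $6$; these properties characterize the M\"obius-Kantor graph. Non-positive curvature then follows from Gromov's link condition: in a piecewise Euclidean $2$-complex built of equilateral triangles, every corner contributes angle $\pi/3$ in the spherical link, so any combinatorial loop of length $\ell$ has spherical length $\ell\pi/3 \geq 2\pi$ whenever $\ell \geq 6$, which is exactly the girth bound just established.

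The main obstacle is the deformation retraction itself, which must be choice-free. I would construct it in two stages. First, cut $\C^2$ into a $\refl(\wt G_4)$-equivariant cell decomposition whose $0$-skeleton is the vertex set of $K$, using a canonical Voronoi-type construction so that each top-dimensional cell contains exactly one vertex of $K$ with stabilizer a copy of $G_4$. Second, define an equivariant straight-line retraction inside each top cell that pushes points of the hyperplane complement toward $K$ while avoiding the four mirrors through the central vertex. The delicate point is verifying that the local retractions on neighboring cells agree on shared faces and that no flowline ever crosses a mirror. This reduces to a finite amount of geometric bookkeeping inside a single fundamental domain, using that the arrangement of mirrors meeting a Voronoi cell is exactly encoded by the M\"obius-Kantor link and is compatible with the equilateral triangle structure of $K$.
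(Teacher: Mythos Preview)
Your overall strategy---Voronoi cells around distinguished points, a radial retraction off the centers, and the Gromov link condition via girth $6$---is the same as the paper's.  However, you have reversed the roles of the two special orbits, and this breaks the construction.

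The points with \emph{maximal} pointwise stabilizer in $\refl(\wt G_4)$ are exactly the $0$-dimensional hyperplane intersections: each is fixed by a full copy of $\refl(G_4)$, and in particular each lies on four mirrors.  These points therefore lie \emph{inside} the hyperplane arrangement, not in the complement, so they cannot be the vertices of $K$.  In the paper's construction these points (the lattice $\Lambda_{D_4}$) are the \emph{centers} of the Voronoi cells, and the first stage of the retraction pushes radially away from them onto the boundary of the $24$-cell.  The vertices of $K$ are instead the vertices of the Voronoi cell structure, namely the points of $\Lambda\setminus\Lambda_{D_4}$; each of these has stabilizer only $\Z/2\Z$ (the antipodal map centered there), which is precisely Theorem~\ref{main:fixed-points}.

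This reversal propagates to your link analysis.  The M\"obius--Kantor graph has $16$ vertices and $24$ edges, not $8$ and $12$.  At a vertex $v$ of $K$ there are \emph{eight} Voronoi $24$-cells meeting (the link of $v$ in the Voronoi cell structure is a $4$-cube with $16$ vertices), and the link of $v$ in $K$ is obtained from the $1$-skeleton of that $4$-cube by deleting the $8$ edges corresponding to equilateral triangles whose centers lie on a mirror.  The group acting on this link is the order-$2$ stabilizer, not $G_4$, so the symmetry argument you sketch for identifying the graph does not apply.  Once the vertex set is corrected, the girth-$6$ argument for non-positive curvature goes through exactly as you describe.
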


The essence of our construction is easy to describe.  We use the set
of $0$-dimensional hyperplane intersections to form Voronoi cells and
then construct a deformation retraction from the hyperplane complement
onto the portion of the Voronoi cell structure contained in the
complement.  For the group $\refl(\wt G_4)$ all of the Voronoi cells
are isometric and their shape is that of the regular $4$-dimensional
polytope known as the $24$-cell.  The $0$-dimensional intersection at
the center of each Voronoi cell means that as a first step one can
remove its interior by radially retracting onto its $3$-dimensional
polytopal boundary built out of regular octahedra.  This procedure
works for this particular complex euclidean reflection group but it
appears that this is one of the few cases where it can be carried out
without significant modifications.  See Remark~\ref{rem:other}.

Next we use the complement complex $K$ to study the structure of the
braid group of $\refl(\wt G_4)$ acting on $\C^2$.  Recall that for any
group $G$ acting on a space $X$ a point $x\in X$ is said to be
\emph{regular} when its $G$-stabilizer is trivial, the \emph{space of
  regular orbits} is the quotient of the subset of regular points by
the free $G$-action and the \emph{braid group of $G$ acting on $X$} is
the fundamental group of the space of regular orbits.  The name
``braid group'' alludes to the fact that when the symmetric group
$\sym_n$ acts on $\C^n$ by permuting coordinates, the braid group of
this action is Artin's classical braid group $\braid_n$.  For complex
spherical reflection groups, one consequence of Steinberg's theorem is
that the hyperplane complement is exactly the set of regular points
\cite{St64, Le04, LeTa09}.  For complex euclidean reflection groups
the two spaces can be distinct and they are distinct in this case.

\begin{main}[Isolated fixed points]\label{main:fixed-points}
  The space of regular points for the complex euclidean reflection
  group $\refl(\wt G_4)$ acting on $\C^2$ is properly contained
  in its hyperplane complement because of the existence of isolated
  fixed points.
\end{main}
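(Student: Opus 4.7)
The plan is to exhibit an explicit element of $\refl(\wt G_4)$ that is not a reflection and whose unique fixed point lies in the hyperplane complement. The natural candidate is the central involution $-I$ of $G_4$, recentered at a vertex and composed with a suitable translation.

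The finite group $G_4$ contains a unique central element of order $2$, which acts on $\C^2$ as $-I$. Since both of its eigenvalues equal $-1$, it is not a reflection and fixes only the origin. The point-stabilizer in $\refl(\wt G_4)$ of any vertex $v_0$ of the arrangement is a conjugate copy of $G_4$, and therefore contains the affine point inversion $c_{v_0}\colon x \mapsto 2v_0 - x$, whose fixed-point set is $\{v_0\}$. Composing with a translation $t_\tau$ from the translation subgroup $T \subset \refl(\wt G_4)$ produces a new element $c_{v_0}\,t_\tau \in \refl(\wt G_4)$ with linear part $-I$ and unique fixed point $p_\tau := v_0 - \tau/2$. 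As $\tau$ ranges over $T$, these points fill the half-lattice $v_0 + \tfrac{1}{2}T$, a refinement of the vertex lattice; for $\tau \notin 2T$ the point $p_\tau$ is not itself a vertex.

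The heart of the proof is to find $\tau \in T \setminus 2T$ with $p_\tau$ off every reflecting hyperplane. The mirror set is the $T$-orbit of the finitely many complex lines fixed by reflections of $G_4$ through the origin, so its image in the quotient torus $\C^2/T$ is a union of proper $2$-dimensional subtori. The group $\tfrac{1}{2}T/T \cong (\Z/2)^4$ has $16$ cosets, and one verifies by a direct coordinate calculation in the standard realization of $\refl(\wt G_4)$ that some of these cosets miss every mirror subtorus. Any such coset yields a point $p_\tau$ in the hyperplane complement fixed by the non-reflection element $c_{v_0}\,t_\tau$, hence not regular, which proves the containment is proper.

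The main obstacle is this coordinate check. A purely dimensional count is not on its own conclusive, since the mirror subtori could in principle together meet every half-lattice coset through the commensurability structure of $T$ with each mirror; however, the explicit matrix presentation of $\refl(\wt G_4)$ and the concrete form of its mirrors inherited from the $24$-cell Voronoi tiling of Theorem~\ref{main:complement} make the verification routine. As a fallback, one can instead use elements of order $4$ or order $6$ in $G_4$ (which likewise have no $1$-eigenvalue) to produce isolated fixed points at alternative rational positions of the form $(I-g)^{-1}g\tau$, providing additional avoidance opportunities.
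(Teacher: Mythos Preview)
Your core idea is the same as the paper's: exhibit an element of $\refl(\wt G_4)$ whose linear part is the central involution $-I$ of $\refl(G_4)$ and whose unique fixed point lies off every mirror. The paper carries this out by computing that the set $FP_A$ of fixed points of elements with linear part $-I$ is exactly the Hurwitz lattice $\Lambda$ (from $2\cdot FP_A = T_0 = 2\Lambda$), and then observing that the points of $\Lambda\setminus\Lambda_{D_4}$ are precisely the vertices of the complement complex $K$ from Theorem~\ref{main:complement}, hence automatically lie in the hyperplane complement. So rather than a separate coordinate check, the paper gets the off-mirror conclusion for free from the Voronoi construction already in hand.

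Your version is fine in outline but has one inaccuracy and one loose end. The inaccuracy: the arrangement vertices form the lattice $\Lambda_{D_4}$, which strictly contains $T=2\Lambda$ (index $4$), so your claim that ``for $\tau\notin 2T$ the point $p_\tau$ is not itself a vertex'' is false as stated---some nontrivial cosets of $2\Lambda$ in $\Lambda$ still land in $\Lambda_{D_4}$. This does not damage the argument, since what you actually need is that \emph{some} $p_\tau$ avoids all mirrors, not that it avoids the vertex set. The loose end is the deferred ``routine coordinate calculation'': with the explicit mirror equations in Fact~8.4 one sees immediately that any point whose four real coordinates are all half-odd integers (e.g.\ $\zeta=\tfrac{1+i+j+k}{2}$) lies on no mirror, so you should just say that rather than gesture at a torus argument. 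Your fallback about order-$4$ and order-$6$ elements is unnecessary, since (as the paper also notes) every non-reflection in $\refl(G_4)$ has a power equal to $-I$, so they produce no new isolated fixed points.
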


Concretely, for every vertex $v$ in the $2$-complex $K$ located inside
the hyperplane complement there is a non-trivial group element that
fixes $v$ and acts as the antipodal map in the coordinate system with
$v$ as its origin.  Moreover, the set of isolated fixed points that
form the vertices of $K$ are the only non-regular points contained in
the hyperplane complement.  Let $\braid(\wt G_4)$ denote the braid
group of $\refl(\wt G_4)$ acting on $\C^2$.  The well-behaved geometry
of $K$ and the isolated fixed points in the hyperplane complement lead
to an unusual mix of properties for a braid group of a reflection
group.

\begin{main}[Braid group]\label{main:braid-group}
  The group $\braid(\wt G_4)$ is a $\cat(0)$ group and it
  contains elements of order~$2$.
\end{main}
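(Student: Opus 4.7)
The plan is to realize $\braid(\wt G_4)$ as the group of lifts of the $\refl(\wt G_4)$-action on $K$ to the universal cover $\wt K$, and then invoke the non-positive curvature of $K$ from Theorem~A.

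Write $G=\refl(\wt G_4)$ and let $M$ denote the hyperplane complement. First, I would verify that the Voronoi-based deformation retraction $M \to K$ is $G$-equivariant: the Voronoi cells are determined by the $G$-invariant set of $0$-dimensional hyperplane intersections, so the $G$-action on $M$ descends to a cellular isometric action on $K$ and $M \simeq K$ as $G$-spaces. Next, I would identify $\braid(\wt G_4) = \pi_1(M_{reg}/G)$ with $\pi_1^{orb}(K/G)$, where $K/G$ carries the orbifold structure with $\Z/2$ cone points at the images $V/G$ of the isolated fixed points from Theorem~B. The crucial local calculation is that near each $[v] \in V/G$ the action is antipodal on a $4$-ball, so the punctured quotient $(B^4\setminus\{0\})/(\Z/2) \simeq \R P^3$ has $\pi_1 = \Z/2$; this matches the orbifold $\pi_1$ of the corresponding cone $B^4/(\Z/2)$. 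An orbifold van~Kampen argument then yields $\pi_1(M_{reg}/G) = \pi_1^{orb}(M/G) = \pi_1^{orb}(K/G)$, and by construction the latter is the group $\wt G$ of lifts of the $G$-action on $K$ to $\wt K$, sitting in the extension $1 \to \pi_1(K) \to \wt G \to G \to 1$.

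With the identification $\braid(\wt G_4) = \wt G$ in hand, the $\cat(0)$ claim is routine. By Theorem~A the complex $K$ is non-positively curved, so its universal cover $\wt K$ is $\cat(0)$; and $\wt G$ acts on $\wt K$ by cellular isometries. The action is cocompact since $\wt K/\wt G = K/G$ is compact, and it is proper because the only non-trivial point stabilizers are the $\Z/2$'s at the preimages of $V$. Hence $\braid(\wt G_4)$ is a $\cat(0)$ group. These same $\Z/2$ vertex stabilizers embed as order-$2$ subgroups of $\wt G$; equivalently, a small loop in $M_{reg}/G$ around $[v]$ maps to the non-trivial stabilizer element under $\braid(\wt G_4)\onto G$ and is therefore non-trivial, while its square lifts to a loop in $M_{reg}$ that bounds a $2$-disk inside $B^4\setminus\{0\}$, so the loop has order exactly~$2$.

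The main obstacle is the identification $\braid(\wt G_4) \cong \wt G$: this converts a fundamental group computation for a non-free quotient of an open $4$-manifold into the tractable question of an orbifold cover of a compact $2$-complex. Once the local $\R P^3$ model around each isolated fixed point is understood and the van~Kampen assembly is carried out, both halves of the theorem follow from the clean picture of $\wt G$ acting properly cocompactly by isometries on the $\cat(0)$ space $\wt K$ with isolated $\Z/2$ stabilizers.
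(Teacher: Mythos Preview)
Your proposal is correct and follows essentially the same route as the paper: both arguments identify $\braid(\wt G_4)$ with the group of lifts of the $\refl(\wt G_4)$-action to the $\cat(0)$ universal cover $\wt K$, use the local antipodal model at the isolated fixed points to see the $\Z/2$ stabilizers, and conclude the $\cat(0)$ and $2$-torsion claims from the resulting proper cocompact isometric action. The only difference is packaging---you use orbifold fundamental group and van~Kampen language, whereas the paper builds an explicit modified complex $K'$ (excising $\epsilon$-balls around the vertices and attaching the $\sph^3/\{\pm 1\}$ links) to make the free action and the identification $\pi_1(K'/G)\cong\braid(\wt G_4)$ concrete.
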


The group $\braid(\wt G_4)$ is a $\cat(0)$ group because it acts
properly discontinuously and cocompactly by isometries on the
$\cat(0)$ universal cover of $K$ and it has elements of order~$2$ that
are caused by the stabilizers of the isolated fixed points in the
hyperplane complement.  Since every finitely generated Coxeter group
is a $\cat(0)$ group that contains $2$-torsion, this combination is
not unusual in the broader world of $\cat(0)$ groups. However, torsion
is unusual in the braid group of a reflection group.  The braid groups
of finite complex reflection groups are torsion-free \cite{Be15}, as
are the braid groups of complexified euclidean Coxeter groups, also
known as euclidean Artin groups or affine Artin groups
\cite{McSu-artin-euclid}.  In fact, it is conjectured that the braid
groups of all complexified Coxeter groups, i.e. all Artin groups, are
torsion-free \cite{GoPa12}.  Thus, this example is a departure from
the norm.

The article is structured from general to specific.  We begin with
basic definitions and results about general complex spherical and
complex euclidean reflection groups.  Then we restrict attention to
complex dimension at most two and describes how quaternions can be
used to give efficient linear-like descriptions of arbitrary
isometries of the complex euclidean plane.  Next, we describe the
$4$-dimensional regular polytope known as the $24$-cell, and
investigate the natural action of $\refl(G_4)$ on this polytope.  The
main tool is a novel visualization technique that makes it easy to
understand the isometries of the $4$-dimensional regular polytopes.
Finally, the last part of the article describes the complex euclidean
reflection group $\refl(\wt G_4)$ in detail and proves our three main
results.

\section{Complex spherical reflection groups\label{sec:spherical}}

This section reviews the definition and classification of the complex
spherical reflection groups.  Recall that in geometric group theory
one seeks to understand groups via their actions on metric spaces and
that the connection between the two is particularly close when the
action is geometric in the following sense.

\begin{defn}[Geometries and geometric actions]\label{def:geometric}
  A metric space $X$ is called a \emph{proper metric space} or a
  \emph{geometry} when for every point $x \in X$ and for every
  positive real $r$, the closed metric ball of radius $r$ around $x$
  is a compact subspace of $X$ and a group $G$ acting on a geometry
  $X$ is said to act \emph{geometrically} when the action of $G$ on
  $X$ is properly discontinuous and cocompact by isometries.
\end{defn}

The first geometry we wish to consider is that of the unit sphere in a
complex vector space with a positive definite inner product.

\begin{defn}[Complex spherical geometry]\label{def:spherical-geometry}
  Let $V = \C^n$ be an $n$-dimensional complex vector space.  When $V$
  comes equipped with a positive definite hermitian inner product that
  is linear in the second coordinate and conjugate linear in the
  first, we say that $V$ is a \emph{complex spherical geometry}.  For
  an appropriate choice of basis, the inner product of vectors $v$ and
  $w$ in $V$ can be written as $\langle v, w \rangle = v^* w =
  \sum_{i=1}^n \bar{v}_i w_i$ where $v$ and $w$ are viewed as column
  vectors or as $n$ by $1$ matrices and for any matrix $A$, $A^*$
  denotes its \emph{adjoint} or conjugate transpose.  The
  \emph{length} of a vector $v$ is $|v|=\sqrt{\langle v,v\rangle}$ and
  \emph{unit vectors} are those of length~$1$.  The linear
  transformations of $V$ that preserve the inner product are the
  \emph{unitary transformations}, they form the \emph{unitary group
    $U(V)$ or $U(n)$} and they are precisely those linear
  transformations that preserve the sphere of unit vectors in $V$ and
  its complex structure, sending complex lines in $V$ to complex lines
  and the corresponding oriented circles in the unit sphere
  $\sph^{2n-1}$ to oriented circles.
\end{defn}

A complex reflection is an elementary isometry of such a geometry.

\begin{defn}[Complex reflections]\label{def:complex-reflections}
  Let $V$ be a complex spherical geometry.  Vectors in $V$ are
  \emph{orthogonal} or \emph{perpendicular} when their inner product
  is $0$ and the \emph{orthogonal complement} of a vector $v$ is the
  set of all vectors perpendicular to $v$.  A \emph{complex
    reflection} $r$ is a unitary transformation of $V$ that multiples
  some unit vector $v$ by a unit complex number $z \in \C$ and
  pointwise fixes the vectors in the orthogonal complement of $v$.
  The formula for the reflection $r = r_{v,z}$ is $r(w) = w - (1-z)
  \langle v,w\rangle v$.  The reflection $r$ has finite order if and
  only if $z = e^{ai}$ where $a$ is a rational multiple of $\pi$ and
  when this occurs we say that $r$ is a \emph{proper} complex
  reflection.  The name refers to the fact that the action of the
  cyclic subgroup generated by $r$ on the unit sphere is properly
  discontinuous if and only if $r$ is a proper reflection.  Since
  properly discontinuous actions require proper complex reflections,
  only proper reflections are considered and we drop the adjective.
  When the complex number $z$ is of the form $z=e^{\frac{2\pi}{m}i}$
  for some positive integer $m$, the complex reflection $r_{v,z}$ is
  said to be \emph{primitive}, and note that every finite cyclic
  subgroup generated by a single proper complex reflection contains a
  unique primitive generator.
\end{defn}

We are interested in groups generated by complex reflections.

\begin{defn}[Complex spherical reflection groups]\label{def:sph-refl-grps}
  A group $G$ is called a \emph{complex spherical reflection group} if
  it is generated by complex reflections acting on a complex spherical
  geometry $V$ so that the action restricted to the unit sphere in $V$
  is geometric in the sense of Definition~\ref{def:geometric}.  Such
  groups are also known as \emph{finite complex reflection groups}.
  If there is an orthogonal decomposition $V = V_1 \oplus V_2$
  preserved by all of the elements of $G$, then $G$ is
  \emph{reducible} and it is \emph{irreducible} when such a
  decomposition does not exist.  In 1954 Shephard and Todd completely
  classified the irreducible complex spherical reflection groups.
  There is a single triply-indexed infinite family $G(de,e,r)$ where
  $d$, $e$ and $r$ are positive integers that they split into $3$
  subcases $G_1$, $G_2$ and $G_3$ based on some additional properties
  and $34$ exceptional cases that they label $G_4$ through $G_{37}$
  \cite{ShTo54,Co76}. Since this article discusses both reflection
  groups and the corresponding braid groups, we use the symbol $G_k$
  with $k$ between $4$ and $37$ to indicate a \emph{Shephard-Todd
    type} analogous to the Cartan-Killing types that index so many
  objects in Lie theory and we write $\refl(G_k)$ to denote the
  exceptional complex spherical reflection group of type $G_k$
  identified by Shephard and Todd.
\end{defn}

The main group of interest here is a euclidean extension of the
smallest exceptional complex spherical reflection group $\refl(G_4)$.

\section{Complex euclidean reflection groups\label{sec:euclidean}}

The transition from complex spherical to complex euclidean geometry
involves replacing the underlying vector space and its distinquished
origin with the corresponding affine space where all points are on an
equal footing.

\begin{defn}[Affine space]
  For any vector space $V$, the abstract definition of the
  corresponding \emph{affine space} is a set $E$ together with a
  simply transitive $V$ action on $E$.  The elements of $E$ are
  \emph{points}, the elements of $V$ are \emph{vectors} and we write
  $x + v$ for the image of point $x \in E$ under the action of $v\in
  V$.  For each linear subspace $U \subset V$ and point $x\in E$ there
  is an \emph{affine subspace} $x + U = \{x + v \mid v \in U\} \subset
  E$ that collects the images of $x$ under the action of the vectors
  in $U$ and the functions $f \colon E \to E$ that send affine
  subspaces to affine subspaces are \emph{affine maps}.  For each
  vector $v \in V$ there is a \emph{translation map $t_v \colon E \to
    E$} that sends each point $x$ to $x+v$ and this is an affine map.
  The collection of all translation maps is an abelian group
  isomorphic to the vector space $V$ under addition and it is a normal
  subgroup of the group $\aff(E)$ of all affine transformations.  If
  we pick a point $x \in E$ as our \emph{basepoint} then every point
  $y$ in $E$ can be labeled by the unique vector $v \in V$ that sends
  $x$ to $y$ so that $E$ based at $x$ is naturally identified with $V$
  and the group of all affine maps can be identified with the
  semidirect product of the translation group and the invertible
  linear transformations of $E$ based at $x$ now identified with $V$.
  In other words, for each point $x \in E$ there is a natural
  isomorphism between the group $\aff(E)$ and the semidirect product
  $V \rtimes GL(V)$.
\end{defn}  

When the vector space $V$ is a complex spherical geometry, it makes
sense to restrict attention to those affine transformations that
preserve the hermitian inner product.

\begin{defn}[Complex euclidean space]
  Let $E$ be an affine space for a complex vector space $V$.  When $V$
  is a complex spherical geometry, then $E$ is a \emph{complex
    euclidean geometry}.  Since an ordered pair $(x,x')$ of points in
  $E$ determines a vector $v_{x,x'} \in V$ that sends $x$ to $x'$, an
  ordered quadruple $(x,x',y,y')$ of points in $E$ determines an
  ordered pair of vectors $(v_{x,x'},v_{y,y'})$ in $V$ to which the
  hermitian inner product can be applied.  An affine map $f\colon E
  \to E$ is called a \emph{complex euclidean isometry} when $f$
  preserves the hermitian inner product of the ordered pair of vectors
  derived from an ordered quadruple of points in $E$.  In other words
  $\langle v_{x,x'},v_{y,y'} \rangle = \langle
  v_{f(x),f(x')},v_{f(y),f(y')} \rangle$ for all $x, x', y, y' \in E$.
  The group of all complex euclidean isometries is denoted $\isom(E)$.
  All translations are complex euclidean isometries and an affine map
  fixing a point $x$ is a complex euclidean isometry if and only if
  the corresponding linear transformation of $V$ is a unitary
  transformation.  Therefore, for each point $x \in E$ there is a
  natural isomorphism between the group $\isom(E)$ and the semidirect
  product $V \rtimes U(V)$ or $\C^n \rtimes U(n)$ once an orthonormal
  coordinate system has been introduced.
\end{defn}

The spherical notion of a complex reflection is extended to complex
euclidean space as follows.

\begin{defn}[Complex euclidean reflection groups]
  An isometry of a complex euclidean space $E$ is called a
  \emph{complex reflection} if it becomes a complex reflection in the
  sense of Definition~\ref{def:complex-reflections} for an appropriate
  choice of origin and identification of the space $E$ with the vector
  space $V$.  For us, a \emph{complex euclidean reflection group} is
  any group generated by complex reflections that acts geometrically
  on a complex euclidean space.  In the literature, the complex
  euclidean reflection groups that act geometrically on $E$ are called
  \emph{crystallographic}.  The image of a complex euclidean
  reflection group $G$ under the projection map from $\isom(E) \to
  U(V)$ is called its \emph{linear part} and the kernel is its
  \emph{translation part}.  In many but not all examples the group $G$
  has the structure of a semidirect product of its linear and
  translation parts.  The group $G$ is called \emph{reducible} or
  \emph{irreducible} depending on the corresponding property of its
  linear part.  Two complex euclidean reflection groups $G$ and $G'$
  acting on complex euclidean spaces $E$ and $E'$ are called
  \emph{equivalent} when there is an invertible affine map from $E$ to
  $E'$ (that need not preserve the complex euclidean metric) so that
  the action of $G$ on $E$ corresponds to the action of $G'$ on $E'$
  under this identification.
\end{defn}

\begin{rem}[Known examples]
  The collection of known inequivalent irreducible complex euclidean
  reflection groups includes $30$ infinite families and $22$ isolated
  examples.  Some of the infinite families have a discrete parameter
  that indicates the dimension of the space on which it acts, some of
  the infinite families have a continuous complex parameter which,
  when varied, produces inequivalent reflection groups that all act on
  the same space, and some have both a discrete and a continuous
  parameter.  The $17$ infinite families with a continuous complex
  parameter correspond exactly to those whose linear part is an
  irreducible finite real reflection group.  There is one such family
  for each simply-laced Cartan-Killing type ($A_n$, $D_n$, $E_6$,
  $E_7$, $E_8$) and multiple families for each of the others ($G_2$
  has $4$, $F_4$ has $3$ and $B_n=C_n$ has $5$ -- except in dimension
  $n=2$ where the identification $\cox(\wt B_2) \cong \cox(\wt C_2)$
  reduces the number of parameterized families from $5$ to $3$).  The
  $7$ families of type $A$, $B=C$ and $D$ have both a continuous
  parameter and a discrete parameter, the $10$ families of type $E$,
  $F$ and $G$ have a continuous parameter only.  Next there are $13$
  infinite families with primitive linear part indexed by a discrete
  parameter but with only one instance in each dimension.  And
  finally, there are $7$ isolated examples with primitive linear part
  that only occur in low dimensions ($3$ in dimension~$1$ and $4$ in
  dimension~$2$) and $15$ isolated examples whose linear part is one
  of the $34$ exceptional complex spherical reflection groups ($5$ in
  dimension~$2$, $7$ in dimension~$3$ and one each in dimensions $4$,
  $5$ and $6$).
\end{rem}

\begin{rem}[Classification]
  The inequivalent irreducible complex euclidean reflection groups
  were essentially classified by Popov in \cite{Po82}.  He established
  many structural results about these groups and gave algorithms in
  each of the various subcases that together could be used to produce
  a complete list.  Some of the details of the computations that
  connect the algorithms with the explicit tables of examples,
  however, were not included and in 2006 Goryunov and Man found an
  isolated example in dimension~$2$ that was not among those listed by
  Popov, thus calling the completeness of the tables into question
  \cite{GoMa06}.
\end{rem}

We write $\refl(\wt G_4)$ to denote the unique complex euclidean
reflection group whose linear part is $\refl(G_4)$. Popov denotes it
$[K_4]$.

\section{Isometries of the complex euclidean line}

The inequivalent complex euclidean reflection groups that act
geometrically on the complex euclidean line are easily classified.  In
this section we review their classification and preview the Voronoi
cell argument in this easy-to-visualize context.

\begin{defn}[Isometries and reflections]
  Every isometry of the complex euclidean line is a function of the
  form $f(x) = e^{ai} x + z$ where $a$ is real and $z$ is an arbitary
  complex number.  And since complex euclidean reflections acting on
  $\C$ must fix some point $z_0$ (i.e. some affine copy of $\C^0$),
  they are precisely those isometries of the form $e^{ai}(x-z_0) +
  z_0$.
\end{defn}

The fact that we are only interested in discrete actions places a
strong restriction on the orders of the complex euclidean reflections
that can be used.

\begin{lem}[Crystallographic]
  If $r$ and $s$ are primitive complex reflections of order $m$ with
  distinct fixed points acting on $\C$, then the action of the group
  they generate is indiscrete unless $m \in \{2,3,4,6\}$.
\end{lem}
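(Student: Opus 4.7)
The plan is to extract a nontrivial translation from the composition $s\circ r^{-1}$, observe that the ambient rotation by $\zeta = e^{2\pi i/m}$ (the common linear part of $r$ and $s$) forces the translation subgroup of $\langle r,s\rangle$ to be closed under multiplication by $\zeta$, and then reduce to the classical fact that the cyclotomic integers $\Z[\zeta_m]$ form a discrete subgroup of $\C$ only when $\phi(m)\le 2$.

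First I would write $r(x) = \zeta(x-z_0)+z_0$ and $s(x) = \zeta(x-z_1)+z_1$ with $z_0\neq z_1$, and compute directly that $s\circ r^{-1}$ is the translation $x \mapsto x + (1-\zeta)(z_1-z_0)$. Set $v := (1-\zeta)(z_1-z_0)$; this is nonzero because $\zeta\neq 1$ and $z_0\neq z_1$, so the translation subgroup $T\subset\langle r,s\rangle$ is nontrivial.

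Next, since the linear part of $r$ is multiplication by $\zeta$, and the conjugate of a translation $t_w$ by any affine isometry with linear part $\zeta$ is the translation $t_{\zeta w}$, the subgroup $T$ is closed under multiplication by $\zeta$. Combined with the abelian group structure of $T$ under vector addition, this forces $T$ to contain the entire $\Z[\zeta]$-module $\Z[\zeta]\cdot v$, which as an additive subgroup of $\C$ is isomorphic to the ring of cyclotomic integers $\Z[\zeta_m]$.

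The key step, which I expect to be the main hurdle insofar as anything here is a hurdle, is the classical number-theoretic observation that $\Z[\zeta_m]$ is a discrete subgroup of $\C$ precisely when $[\mathbb{Q}(\zeta_m):\mathbb{Q}]=\phi(m)$ is at most $2$, that is, when $m\in\{1,2,3,4,6\}$; for any other $m$ the ring $\Z[\zeta_m]$ has $\mathbb{Q}$-rank at least $3$ inside the two-dimensional real vector space $\C$ and is therefore dense. Since $m=1$ would make $r$ and $s$ the identity rather than genuine reflections, and since a discrete translation subgroup is a necessary condition for a discrete group action on $\C$, the only surviving values are $m\in\{2,3,4,6\}$.
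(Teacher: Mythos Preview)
Your argument is correct, and it differs from the paper's in an interesting way.  The paper also starts with the translation $t = rs^{-1}$, but instead of invoking the full $\Z[\zeta]$-module structure, it composes the two conjugates $rtr^{-1}$ and $r^{-1}tr$ to produce a single translation by $(\zeta+\zeta^{-1})v = 2\cos(2\pi/m)\,v$ lying in the \emph{same real line} as $v$; indiscreteness then follows from the elementary fact that $\Z + \alpha\Z$ is dense in $\R$ whenever $\alpha$ is irrational, together with Niven's theorem that $\cos(2\pi/m)\in\mathbb{Q}$ only for $m\in\{1,2,3,4,6\}$.  Your route is more structural---it immediately identifies the entire translation subgroup as a $\Z[\zeta_m]$-module and appeals to the rank formula $\phi(m)$---while the paper's is more hands-on and avoids any reference to cyclotomic fields.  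The two are really the same phenomenon viewed at different levels: the real algebraic integer $\zeta+\zeta^{-1}$ that the paper isolates is precisely the witness to $\Z[\zeta_m]$ having $\Z$-rank greater than~$2$ when $\phi(m)>2$.
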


\begin{proof}
  The product $t=rs^{-1}$ is a translation and the composition of the
  translations $rtr^{-1}$ and $r^{-1}tr$ is another translation in the
  same direction as $t$ but its translation distance is $2\cos
  \frac{\pi}{m}$ times that of $t$.  In particular, the group of
  translations in this direction act indiscretely unless $\cos
  \frac{\pi}{m}$ is rational, and this is true exactly for $m \in
  \{2,3,4,6\}$.
\end{proof}

The crystallographic restriction makes it easy to classify the complex
euclidean reflection groups that act on the complex euclidean line.

\begin{thm}[Classification]
  If $G$ is a complex euclidean reflection group that acts
  geometrically on the complex euclidean line, then every reflection in
  $G$ has order $2$, $3$, $4$ or $6$ and its reflections of maximal
  order generate~$G$.  When the maximal order is $2$ there is a
  $1$-parameter family of such groups, but when it is $3$, $4$ or $6$
  there is a unique such group up to affine equivalence.
\end{thm}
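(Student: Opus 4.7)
The plan is to establish the three claims in sequence. First, for the restriction on orders: since $G$ acts cocompactly, its translation subgroup $T$ is a rank-$2$ lattice in $\C$, so given any reflection $r \in G$ of order $m$ and any nontrivial $t \in T$, the conjugate $trt^{-1}$ is a reflection of the same order $m$ whose fixed point differs from that of $r$; the crystallographic lemma immediately forces $m \in \{2,3,4,6\}$.

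Second, to see that reflections of maximal order generate $G$, I would first narrow the possible sets of reflection orders. The key observation is that the composition of two reflections in $G$ is itself a reflection whenever the product of their linear parts is nontrivial, and its order equals the order of this product. Composing reflections of orders $3$ and $4$, or of orders $4$ and $6$, yields a reflection of order $12$, contradicting the first step; and coexistence of orders $2$ and $3$ forces a reflection of order $6$ to appear. Hence the possible sets of reflection orders occurring in $G$ are $\{2\}$, $\{3\}$, $\{2,4\}$, and $\{2,3,6\}$. In the first two cases there is nothing to prove. For $\{2,4\}$, fix any order-$4$ reflection $r$ and any order-$2$ reflection $s$; since $rs$ has linear part $-i$, it is itself an order-$4$ reflection, so $s = r^{-1}(rs)$ lies in the subgroup $H$ generated by order-$4$ reflections. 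For $\{2,3,6\}$, the same trick with $r$ of order $6$ applied to an order-$3$ reflection $u$ -- some product $r^a u^b$ with $a, b \in \{\pm 1\}$ has linear part of order $6$ -- shows $u \in H$; and applied to an order-$2$ reflection $s$ -- with $rs$ having linear part of order $3$, hence in $H$ by the previous case -- shows $s \in H$.

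For the third claim, fix a basepoint to identify $G$ with $T \rtimes \overline{G}$, where $T \subset \C$ is the translation lattice and $\overline{G} \subset U(1)$ is the cyclic linear part. The lattice $T$ is $\overline{G}$-invariant, so in the cases max $\in \{3,4,6\}$ it must be stable under a rotation of order at least $3$; such a lattice is hexagonal (for max $\in \{3,6\}$) or square (for max $= 4$), and is unique up to multiplication by a nonzero complex scalar, hence up to affine equivalence. In the max $= 2$ case, $\overline{G} = \{\pm 1\}$ preserves every rank-$2$ lattice in $\C$, so $T$ can be any rank-$2$ lattice, yielding the expected one-parameter family parametrized by the modular fundamental domain. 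The main obstacle I anticipate is the mixed-order bookkeeping in step two, where one must carefully enumerate products of linear parts and verify the order of each resulting root of unity; the remaining arguments are routine lattice analysis.
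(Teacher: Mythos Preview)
The paper does not supply a proof of this classification theorem; it is stated as a known result and followed only by explicit descriptions of the resulting groups (the order-$2$ family and the three groups $\refl(\wt G_3(m))$ for $m\in\{3,4,6\}$). So there is nothing in the paper to compare your argument against, and your proposal is in fact more than the paper provides.

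Your proof is essentially correct. A few small points are worth making explicit. In step one you tacitly use that a discrete cocompact subgroup of $\C \rtimes \unt(1)$ has finite image in $\unt(1)$ (a one-dimensional Bieberbach statement), which is what guarantees that the translation subgroup is a genuine rank-$2$ lattice before you conjugate. In step two your list of possible order-sets is right, but it also relies on exclusions you do not spell out: $\{4\}$ alone is impossible because the square of an order-$4$ reflection has order~$2$, and similarly an order-$6$ reflection has powers of orders $2$ and $3$, so $\{6\}$, $\{2,6\}$, $\{3,6\}$ all collapse to $\{2,3,6\}$; together with the three incompatibilities you do state, this pins down the four sets. In step three, the decomposition $G \cong T \rtimes \bar G$ holds precisely because a maximal-order reflection $r$ has the same order as its image in $\bar G$ and hence gives a section of $G \onto \bar G$; once that is said, the uniqueness of a rank-$2$ lattice invariant under a rotation of order $\geq 3$ is the standard minimal-vector argument you sketch, and for $m=2$ every lattice works, yielding the modular-curve parameter as you say.
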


We include a brief description of each case.

\begin{exmp}[Order $2$]
  When all reflections have order $2$, their fixed points form a
  lattice in $\C$, i.e. a discrete $\Z^2$ subgroup in $\C$ once one of
  these fixed points has been chosen as the origin.  After rescaling
  so that there are fixed points at $0$ and $1$ and no pair of fixed
  points less than $1$ unit apart, the various inequivalent cases are
  described by a third generator fixing a point $z$ with $|z| \geq 1$
  and the real part of $z$ in the interval $[-\frac12,\frac12]$ with
  some identifications along the boundary.
\end{exmp}

\begin{exmp}[Orders $3$, $4$ and $6$]
  For $m=3$, $m=4$ and $m=6$ we start with an equilateral triangle, an
  isosceles right triangle and a triangle with angles $\frac{\pi}{2}$,
  $\frac{\pi}{3}$ and $\frac{\pi}{6}$, respectively.  There is a
  triangular tiling of $\C$ generated by the real reflections in the
  sides of this triangle.  The real reflection groups generated are
  the euclidean Coxeter groups $\cox(\wt A_2)$, $\cox(\wt B_2)$ and
  $\cox(\wt G_2)$, respectively.  In each case, the index $2$ subgroup
  of orientation-preserving isometries is generated by those complex
  reflections rotating through an angle of $\frac{2\pi}{m}$ fixing a
  point where $2m$ triangles met.  These groups are denoted $[K_3(m)]$
  in Popov's notation and $\refl(\wt G_3(m))$ in ours.
\end{exmp}

\begin{rem}[Fixed points and translations]\label{rem:fixed-points}
  Let $G$ be a complex euclidean reflection group acting on $\C$, let
  $T$ be the subgroup of translations, let $T_0$ be the images of the
  origin under the translations in $T$ and let $FP_m$ be the fixed
  points of the primitive reflections of order $m$ (assuming they
  exist) and assume that the origin is fixed by a primitive reflection
  $r$ of order $m$.  The computation $t_v \circ r \circ t_v^{-1} (x) =
  z(x-v)+v = zx + (1-z)v = t_{(1-z)v} \circ r(x)$ with $z =
  e^{\frac{2\pi}{m}i}$ shows that $(1-z) \cdot FP_m = T_0$.  In the
  group $\refl(\wt G_3(6))$, for example, $2 \cdot FP_2 = (1-\omega)
  \cdot FP_3 = FP_6 = T_0$, where $\omega = e^{2\pi i/3}$ is a
  primitive cube-root of unity.
\end{rem}

\begin{defn}[Voronoi cells]
  Let $S$ be a discrete set of points in some euclidean space $E$.
  The \emph{Voronoi cell} around $s$ is the set of points in $E$ that
  are as close to $s$ as they are to any point in $S$.  These regions
  are delineated by the hyperplanes that are equidistant between two
  points in $S$. Thus, the Voronoi cells are euclidean polytopes so
  long as these regions are bounded (as they are in our context).  The
  union of these euclidean polytopes gives the entire euclidean space
  $E$ a piecewise euclidean cell structure that we call the
  \emph{Voronoi cell structure}.  The Voronoi cell structure of a
  complex euclidean reflection group $G$ is the cell structure
  obtained when $S$ is the set of $0$-dimensional intersections of the
  fixed hyperplanes of the complex reflections in $G$.
\end{defn}

As should be clear from its definition, the Voronoi cell structure is
preserved by the complex euclidean group used to create it.

\begin{exmp}[Voronoi cells]
  Let $G$ be one of the complex euclidean reflection groups $\refl(\wt
  G_3(m))$ with $m \in \{3,4,6\}$ and let $S$ be the set of fixed
  points for the reflections in $G$.  The vertices of the Voronoi
  cells in this case are the centers of the inscribed circles of the
  triangles in the corresponding triangular tiling, the edges are
  built out of the altitudes from these centers to the sides of the
  triangles and the Voronoi cells themselves are regular polygons,
  hexagons for $m=3$, squares and octagons for $m=4$ and squares,
  hexagons and dodecagons for $m=6$.  The case $m=3$ is illustrated in
  Figure~\ref{fig:voronoi}.
\end{exmp}

\begin{figure}
  \begin{tikzpicture}[scale=.8]
    \clip (-5,-4) rectangle (5,4);
    \def\sqrtthree{1.732050}
    \def\hexorange#1{
      \draw[orange!70,thick] #1 +(30:\sqrtthree)
      \foreach \a in {30,90,...,360} {node {} -- +(\a:\sqrtthree) } -- cycle; 
      \filldraw[orange!70, opacity=.2] #1 +(30:\sqrtthree) 
      \foreach \a in {30,90,...,360} { -- +(\a:\sqrtthree) } -- cycle;
    }
    \hexorange{(0,0)}
    \foreach \a in {0,60,...,300}{\hexorange{(\a:3)}}
    \foreach \a in {30,90,...,330}{\hexorange{(\a:3*\sqrtthree)}}
    \foreach \a in {60,120,...,360}{\hexorange{(\a:6)}}
    \def\hextri#1#2{
      \draw[help lines] #1+(0:#2) \foreach \a in {60,120,...,300} { -- +(\a:#2) } -- cycle;
      \foreach \a in {0,60,...,300}{\draw[help lines] #1  -- ++(\a:#2);}
    }
    \hextri{(0,0)}{3}
    \foreach \b in {30,90,...,330} {\hextri{(\b:3*\sqrtthree)}{3}}
    \fill[green!70!black] (0,0) circle(1mm);
    \foreach \a in {0,120,240} {
      \fill[red!70!black] (\a:3) circle(1mm);
      \fill[blue!70!black] (\a+60:3) circle(1mm);
      \fill[blue!70!black] (\a:6) circle(1mm);
      \fill[red!70!black] (\a+60:6) circle(1mm);
      \fill[green!70!black] (\a+30:3*\sqrtthree) circle(1mm);
      \fill[green!70!black] (\a-30:3*\sqrtthree) circle(1mm);
    }
    \draw[thick] (-5,-4) rectangle (5,4);
  \end{tikzpicture}
\caption{The Voronoi cell structure for the complex euclidean
  reflection group $\refl(\wt G_3(3))$ is a hexagonal tiling of $\C$
  and the hyperplane complement deformation retracts to its
  $1$-skeleton.\label{fig:voronoi}}
\end{figure}

The Voronoi cells can be used to understand the braid groups.

\begin{thm}[Braid groups]
  For $m=3$, $4$ and $6$, the braid group $\braid(\wt G_3(m))$ is
  isomorphic to the free group of rank~$2$.
\end{thm}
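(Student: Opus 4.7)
The plan is to show that the space of regular orbits of $G = \refl(\wt G_3(m))$ acting on $\C$ is homotopy equivalent to the wedge of two circles, so that its fundamental group is $F_2$.

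First I would verify that the hyperplane complement coincides with the space of regular points. Every non-translation element of $G$ is a rotation about a single point of $\C$, and any such rotation is a complex reflection; by the crystallographic lemma its order belongs to $\{2,3,4,6\}$ (in fact it divides the order $m$ of the linear part of $G$), making it a primitive complex reflection. Hence every point with non-trivial $G$-stabilizer lies on a reflection hyperplane, so $G$ acts freely on the hyperplane complement $\C \setminus S$ (where $S$ denotes the set of reflection fixed points), and $\braid(\wt G_3(m)) = \pi_1((\C \setminus S)/G)$.

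Next I would identify the quotient orbifold $\C/G$ as one of the three euclidean triangular $2$-orbifolds: $\sph^2(3,3,3)$ for $m=3$, $\sph^2(2,4,4)$ for $m=4$, and $\sph^2(2,3,6)$ for $m=6$. This follows because $G$ is the index-$2$ rotation subgroup of the Coxeter group $\cox(\wt A_2)$, $\cox(\wt B_2)$, or $\cox(\wt G_2)$ respectively, whose fundamental domain is the triangle described in the preceding example; the rotation subgroup has a fundamental domain comprising two such triangles, and the vertex angles of the triangle determine the three cone point orders. These three cone points correspond precisely to the three $G$-orbits of points in $S$. Removing them strips the orbifold of all non-trivial orbifold structure (by the first step), leaving $\sph^2$ with three points removed as the underlying topological space of $(\C \setminus S)/G$; this is homotopy equivalent to a wedge of two circles, so $\pi_1 \cong F_2$.

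The main obstacle I anticipate is the verification in the second step that the reflection fixed points split into exactly three $G$-orbits. This can be handled transparently via the Voronoi cell structure: the $0$-dimensional reflection fixed points are exactly the Voronoi cell centers, and Remark~\ref{rem:fixed-points} lets me pin down the translation sublattice $T \subset G$ inside the lattice generated by $S$ and deduce the orbit count in each case. Alternatively, one could bypass the orbifold language by quotienting the Voronoi $1$-skeleton (onto which $\C \setminus S$ deformation retracts) directly by $G$ and verifying via an Euler-characteristic computation that the resulting finite graph is connected with $\chi = -1$, hence has free fundamental group of rank $2$.
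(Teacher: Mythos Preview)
Your proposal is correct. Your primary route---identifying the quotient orbifold $\C/G$ as one of the euclidean turnovers $\sph^2(3,3,3)$, $\sph^2(2,4,4)$, $\sph^2(2,3,6)$ and then deleting the three cone points to obtain a thrice-punctured sphere---is a genuinely different argument from the paper's. The paper instead deformation retracts the hyperplane complement onto the $1$-skeleton of the Voronoi cell structure, observes that $G$ acts freely there, and computes the quotient graph directly: two vertices joined by three edges (a theta graph), whose fundamental group is $F_2$. The alternative you sketch in your final sentence is essentially this argument.

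Both routes are short and valid. Your orbifold approach is arguably the slicker one in complex dimension~$1$: once $G$ is recognised as the rotation subgroup of a euclidean triangle Coxeter group, the three-cone-point structure is immediate and no cell-by-cell bookkeeping is needed. The paper's Voronoi approach is chosen deliberately as a preview, since it is the one-dimensional rehearsal of the method used for the main results about $\refl(\wt G_4)$ acting on $\C^2$, where no comparably clean global orbifold description is available and the Voronoi cells carry the argument. One small quibble: not every non-translation element of $G$ is a \emph{primitive} reflection (for instance the square of an order-$3$ rotation is not), but since every such element is nonetheless a complex reflection with a single fixed point in $S$, your conclusion that the hyperplane complement coincides with the set of regular points stands.
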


\begin{proof}
  In all three cases, once the fixed points of the reflections are
  removed, the remainder deformation retracts to the $1$-skeleton of
  the Voronoi cell structure.  The group acts freely on the
  $1$-skeleton but it does not act transitively on the vertices.  The
  quotient graph has $2$ vertices with $3$ edges connecting them, a
  graph whose fundamental group is the free group of rank~$2$.
\end{proof}

\section{Quaternions and their complex structures\label{sec:quaternion}}

In this section we recall basic properties of the quaternions and
their subalgebras isomorphic to the complex numbers.  The goal is to
establish notation for the quaternions with a specified complex
structure.

\begin{defn}[Quaternions]\label{def:quat}
  Let $\quat$ denote the \emph{quaterions}, the skew field and normed
  division algebra of dimension $4$ over the reals with standard basis
  $\{1,i,j,k\}$ where $i^2 = j^2 = k^2 = ijk = -1$ and $i$, $j$ and
  $k$ pairwise anticommute.  The reals $\R$ are identified with the
  $\R$-span of $1$ inside $\quat$ and they form its center: every real
  is central and every central element is real.  If $q = a+bi+cj+dk$
  with $a,b,c,d \in \R$ then $\real(q)=a$ is its \emph{real part} and
  $\imag(q) = bi+cj+dk$ is its \emph{imaginary part}.  A quaternion is
  \emph{purely imaginary} if its real part is $0$ and \emph{real} if
  its imaginary part is $0$.  The \emph{conjugate} of $q$ is $\bar{q}
  = a - (bi+cj+dk)$, its \emph{norm} $\norm(q) = q\bar{q} = \bar{q}q =
  a^2+b^2+c^2+d^2$ and its \emph{length} $\size{q}$ is the square root
  of its norm. The \emph{distance} between $q$ and $q'$ is the length
  of $q-q'$.  This distance function makes $\quat$ into a
  $4$-dimensional euclidean space with $\{1,i,j,k\}$ as an orthonormal
  basis.  The quaternions in the unit $3$-sphere in $\R^4$ have norm
  $1$ and are the set of \emph{unit quaternions}.  Every nonzero
  quaternion can be \emph{normalized} by dividing by its length.
\end{defn}

The unit quaternions show that the $3$-sphere has a Lie group
structure.  It can be identified with the compact symplectic Lie group
$\symp(1)$, the spin group $\spin(3)$ (the double cover of $SO(3)$) or
special unitary group $\sunt(2)$ once a complex structure has been
chosen.  The quaternions have a canonical copy of the reals and thus a
canonical euclidean structure, but they contain a continuum of
subalgebras isomorphic to $\C$ and a corresponding continuum of ways
to specify a complex spherical structure.

\begin{defn}[Complex subalgebras]\label{def:c-sub}
  For each purely imaginary unit quaternion $u$, $u^2=-1$ and the
  $\R$-span of $1$ and $u$ is a subalgebra of $\quat$ isomorphic to
  the complex numbers with $u$ playing the role of $\sqrt{-1}$.  More
  generally, note that every nonreal quaternion $q_0$ determines a
  complex subalgebra of $\quat$ in which $q_0$ has positive imaginary
  part.  Concretely, the $\R$-span of $1$ and $q_0$ is a complex
  subalgebra and the isomorphism with $\C$ identifies $\sqrt{-1}$ with
  the normalized imaginary part of~$q_0$. We call this the
  \emph{complex subalgebra determined by~$q_0$}.
\end{defn}

The choice of a complex subalgebra determines a complex structure.

\begin{defn}[Complex structures]\label{def:complex-structures}
  Let $q_0$ be a nonreal quaternion and identify $\C$ with the complex
  subalgebra of $\quat$ determined by $q_0$.  The right cosets $q \C$
  of $\C$ inside $\quat$ partition the nonzero quaternions into right
  \emph{complex lines}.  Vector addition and this type of right scalar
  multiplication turn $\quat$ into a $2$-dimensional right vector
  space over this subalgebra $\C$.  In addition, there is a unique
  positive definite hermitian inner product on this $2$-dimensional
  complex vector space so that the unit quaternions have length $1$
  with respect to this inner product.  We call this the \emph{right
    complex structure on $\quat$ determined by $q_0$} and we write
  $\quat_{q_0}$ to denote the quaternions with this choice of complex
  structure.  Note that when $q_1 = a+bq_0$ with $a$ real and $b$
  positive real, $\quat_{q_0}$ and $\quat_{q_1}$ define the same
  complex structure.
\end{defn}

The complex structure used in our computations is $\quat_\omega$ where
$\omega = \frac{-1+i+j+k}{2}$ is a cube root of unity.  The pure unit
quaternion that plays the role of $\sqrt{-1}$ in the chosen complex
subalgebra is $\frac{i+j+k}{\sqrt{3}}$.

\begin{defn}[Unit complex numbers]\label{def:unit-circle}
  Because the complex subalgebra we use in our computations does not
  contain the quaternion $i$, we do not use $i$ as a notation for
  $\sqrt{-1}$ in the distinguished copy of $\C$, but we make an
  exception for the unit complex numbers.  Specifically, we write
  $z=e^{ai}$ with $a$ real for the numbers on the unit circle in $\C$
  even though the chosen copy of $\C$ does not contain the quaternion
  $i$.  Since this misuse of the letter $i$ only occurs as an exponent
  and only in this particular formulation, the improvement in clarity,
  in our opinion, outweighs any potential confusion.
\end{defn}

Those who prefer computations over $\C$ can select an ordered basis
and work with coordinates.  Note that we use the letter $z$ rather
than $q$ when we wish to emphasize that a particular quaternion lives
in the distinguished copy of $\C$.

\begin{defn}[Bases and Coordinates]
  Let $\quat_{q_0}$ be the quaternions with a complex structure.
  Every ordered pair of nonzero quaternions $q_1$ and $q_2$ that
  belong to distinct complex lines form an \emph{ordered basis} of
  $\quat_{q_0}$ viewed as a $2$-dimensional right complex vector
  space.  In particular, their right $\C$-linear combinations $q_1 \C
  + q_2 \C$ span all of $\quat_{q_0}$ and for every $q \in
  \quat_{q_0}$ there are unique \emph{coordinates} $z_1, z_2 \in \C$
  such that $q = q_1 z_1 + q_2 z_2$.  When the basis $\mathcal{B} =
  \{q_1,q_2\}$ is ordered we view the coordinates of $q$ as a column
  vector.  When the complex structure is determined by $j$ and the
  ordered basis $\mathcal{B} = \{1,i\}$, for example, the quaternion
  $q = a + bi + cj + dk$ has coordinates $z_1 = a+cj$ and $z_2 = b+dj$
  because $q = 1(a + cj) + i(b+dj)$.  In other words, inside $\quat_j$
\[q = a + bi + cj + dk = \left[ \begin{array}{c} z_1 \\ z_2 \end{array}
  \right]_\mathcal{B} = \left[ \begin{array}{c} a+cj
    \\ b+dj \end{array} \right]_\mathcal{B}.\]

\end{defn}

\section{Isometries of the complex euclidean plane\label{sec:isometries}}

This section concisely describes each isometry of the complex
euclidean plane using an elementary quaternionic map.  We begin with the
left and right multiplication maps.

\begin{defn}[Spherical maps]\label{def:sph-maps}
  For each quaternion $q$ there is a left multiplication map $L_q(x) =
  qx$ and a right multiplication map $R_q(x) = xq$ from $\quat$ to
  itself and these maps are isometries of the canonical euclidean
  structure of $\quat$ if and only if $q$ has length $1$.  When $q$ is
  not a unit, they are euclidean similarities but not isometries since
  they change lengths.  When $q$ is a unit quaternion, both $L_q$ and
  $R_q$ are orientation preserving euclidean isometries that fix the
  origin, send the unit $3$-sphere to itself and move every point in
  $\sph^3$ the same distance.  For each pair of unit quaternions $q$
  and $q'$, there is a function defined by the composition $f = L_q
  \circ R_{q'} = R_{q'} \circ L_q$ or explicitly by the equation $f(x)
  = q x q'$ that we call a \emph{spherical map}.  Every spherical map
  induces an orientation preserving isometry of $\sph^3$ and every
  orientation preserving isometry of $\sph^3$ can be represented as a
  spherical map in precisely two ways.  The second representation is
  obtained from the first by negating both $q$ and $q'$.  This
  correspondence essentially identifies the topological space $\sph^3
  \times \sph^3$ of pairs of unit quaternions with the Lie group
  $\spin(4)$, the double cover of $\sorth(4)$. For details see
  \cite{CoSm03}.
\end{defn}

The spherical maps that preserve a complex structure are special.

\begin{defn}[Complex spherical maps]
  Once a complex structure is added to the quaternions, only some
  spherical maps preserve this structure and we call those that do
  \emph{complex spherical maps}.  For every unit quaternion $q$ the
  left multiplication map $L_q$ sends the complex lines in
  $\quat_{q_0}$ to complex lines and it is a complex spherical
  isometry.  Right multiplication is different because of the
  noncommutativity of quaternionic multiplication.  The only right
  multiplication maps that sent complex lines to complex lines are
  those of the form $R_z$ where $z$ is number in the chosen complex
  subalgebra and the only isometries among them are those where $z$ is
  a unit.  When $z = e^{ai}$ is unit complex number (i.e. a unit
  quaternion in the complex subalgebra generated by $1$ and $q_0$),
  the map $R_z$ is a complex spherical isometry that stabilizes each
  individual complex line $q\C$ setwise and rotates it by through an
  angle of $a$ radians.  As $z$ varies through the unit complex
  numbers, this motion is called the \emph{Hopf flow}.
\end{defn}

The next proposition records the fact that left multiplication and the
Hopf flow are sufficient to generate all complex spherical isometries.

\begin{prop}[Complex spherical isometries]\label{prop:complex-sph-iso}
  The spherical maps that preserve the complex structure of
  $\quat_{q_0}$ are precisely those of the form $x \mapsto qxz$ where
  $q$ is a unit quaternion and $z$ is a unit complex number in the
  chosen complex subalgebra.
\end{prop}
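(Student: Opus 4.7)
The plan is to reduce the proposition to a centralizer computation inside the quaternions. By Definition~\ref{def:sph-maps}, every spherical map has the form $f(x) = qxq'$ for unit quaternions $q, q'$, and preserving the complex structure on $\quat_{q_0}$ amounts to being right $\C$-linear with respect to the chosen subalgebra $\C \subset \quat$ of Definition~\ref{def:c-sub}. That is, $f(xc) = f(x)c$ for all $c \in \C$. Expanding both sides gives $qxcq' = qxq'c$, so right $\C$-linearity is equivalent to the single requirement that $q'$ commutes with every element of $\C$.

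The heart of the proof is then showing that the centralizer of $\C$ in $\quat$ is $\C$ itself. My approach is a direct calculation. Let $u$ be the normalized imaginary part of $q_0$, so that $u^2 = -1$ and $\C = \R\cdot 1 + \R\cdot u$, and let $v, w$ be an orthonormal pair of purely imaginary unit quaternions spanning the $\R$-orthogonal complement of $\C$ in $\quat$. The identities $uv + vu = 0$ and $uw + wu = 0$ follow from the standard fact that orthogonal purely imaginary unit quaternions anticommute. Writing an arbitrary quaternion as $q' = a + bu + cv + dw$ with $a, b, c, d \in \R$ and imposing $q'u = uq'$ then forces $c = d = 0$. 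Hence $q' \in \C$, and since $q'$ is a unit it can be written as $z = e^{ai}$ in the notation of Definition~\ref{def:unit-circle}, producing the claimed form $f(x) = qxz$.

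For the converse, any map of that form is the composition $L_q \circ R_z$ of a left multiplication by a unit quaternion, which sends complex lines to complex lines, with the Hopf flow $R_z$, which stabilizes each individual complex line setwise; so the composition preserves the complex structure automatically. The main obstacle, such as it is, lies not in any individual step but in being careful about what ``preserves the complex structure'' ought to mean: one could imagine reading it as merely permuting the partition of $\quat \setminus \{0\}$ into complex lines, which is a priori weaker than right $\C$-linearity and would admit an extra antiholomorphic involution. Fixing the honestly $\C$-linear interpretation at the outset is what makes the centralizer reduction clean.
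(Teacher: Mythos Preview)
Your argument is correct. The key reduction to the centralizer condition $cq' = q'c$ for all $c \in \C$ is clean, the computation showing $C_{\quat}(\C) = \C$ via anticommutation of orthogonal pure imaginaries is standard and valid, and your closing remark about the antiholomorphic ambiguity is well taken: the paper's Definition~\ref{def:spherical-geometry} explicitly requires that oriented circles go to oriented circles, which is exactly what rules out the conjugate-linear case and justifies your reading of ``preserves the complex structure'' as right $\C$-linearity.

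As for comparison: the paper does not actually prove this proposition. It is introduced as a recorded fact, and the only justification offered is the sentence immediately following it, which identifies the statement with the short exact sequence $\orth(1) \hookrightarrow \symp(1) \times \unt(1) \twoheadrightarrow \unt(2)$, i.e.\ the standard Lie-group isomorphism $\unt(2) \cong (\sph^3 \times \sph^1)/\{\pm 1\}$. Your proof is therefore not a different route so much as an actual proof where the paper gives none; what you gain is a self-contained elementary argument that does not presuppose the reader already knows that particular description of $\unt(2)$, at the modest cost of a short coordinate computation.
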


As with general spherical maps, each complex spherical map can be
represented in two ways because of the equality $qxz = (-q)x(-z)$.
This gives a map from $\sph^3 \times \sph^1 \onto \unt(2)$ with kernel
$\{\pm 1\}$, which corresponds to the short exact sequence $\orth(1)
\into \symp(1) \times \unt(1) \onto \unt(2)$.  For later use we
concretely describe the action of $L_z$ and $R_z$ for any unit complex
number $z$ in some detail.

\begin{rem}[Left and Right]\label{rem:left-right}
  Let $\quat_{q_0}$ be the quaternions with a complex structure, let
  $q_1$ be any unit quaternion orthogonal to both $1$ and $q_0$, and
  let $z = e^{ai}$ with $a$ real be a unit complex number.  Both maps
  $L_z$ and $R_z$ stabilize the complex lines $1\C$ and $q_1\C$
  setwise, but their actions on these lines are slightly different.
  The map $R_z$ rotates both lines through an angle of $a$ radians
  while the map $L_z$ rotates $1\C$ through an angle of $a$ radians
  and the line $q_1\C$ through an angle of $-a$ radians.  The minus
  occurs because $q_1$, being orthogonal to $1$ and $q_0$, is a pure
  imaginary quaternion that commutes with $1$ and anticommutes with
  the pure imaginary part of $q_0$. Thus $z q_1 = q_1 \bar{z}$ and
  $\bar{z} = e^{-ai}$.

  In the ordered basis $\mathcal{B} = \{1,q_1\}$ the hermitian inner
  product is the standard one, $q_1\C$ is the unique complex line that
  is orthogonal to $1\C$, and the maps $R_z$ and $L_z$ can be
  represented as left multiplication by $2\times 2$ matrices over the
  complex numbers on the column vector of coordinates with respect to
  $\mathcal{B}$.  Let $x$ be the quaternion with coordinates $x_1$ and
  $x_2$ with respect to $\mathcal{B}$ so that $x = 1x_1 + q_1 x_2$ with
  $x_1, x_2 \in \C$.  The element $xz = x_1z + q_1 x_2z = zx_1 + q_1 z
  x_2$ because elements in $\C$ commute.  Thus:
  \[ R_z(x) = xz = \left[ \begin{array}{cc} e^{ai} & 0 \\ 0 & e^{ai} \end{array}
    \right] \left[ \begin{array}{c} x_1 \\ x_2 \end{array}
    \right]_\mathcal{B} \]
  On the other hand, the element $zx = zx_1 + z q_1 x_1 = z x_1 + q_1
  \bar{z} x_2$ as discussed above.  Thus:
  \[ L_z(x) = zx = \left[ \begin{array}{cc} e^{ai} & 0 \\ 0 & e^{-ai} \end{array}
    \right] \left[ \begin{array}{c} x_1 \\ x_2 \end{array}
    \right]_\mathcal{B} \]
\end{rem}

Note that the matrix for $R_z$ lies in the center of $\unt(2)$ and the
matrix for $L_z$ lies in $\sunt(2)$.  

\begin{defn}[Complex reflections]\label{def:complex-reflections-q}
  Let $\quat_{q_0}$ be the quaternions with a complex structure, let
  $q_1$ be any unit quaternion orthogonal to both $1$ and $q_0$, and
  let $z = e^{ai}$ with $a$ real be a unit complex number.  The
  complex spherical map $L_z \circ R_z (x) = zxz$ is a complex
  reflection because it fixes the complex line $q_1\C$ pointwise and
  rotates the complex line $\C = 1\C$ through an angle of $2a$
  radians.  In the notation of
  Definition~\ref{def:complex-reflections} this map is $r_{1,z^2}$.
  To create an arbitrary complex spherical reflection $r_{q,z^2}$ with
  $q$ a unit quaternion, it suffices to conjugate $r_{1,z^2}$ by $L_q$
  since the composition $L_q \circ r_{1,z^2} \circ L_{q^{-1}}$ defined
  by the equation $x \mapsto (qzq^{-1})xz$ rotates the complex line
  $q\C$ through an angle of $2a$ and fixes the unique complex line
  orthogonal to $q\C$ pointwise.
\end{defn}

This explicit description makes complex reflections easy to detect.

\begin{prop}[Complex reflections]\label{prop:complex-refl-q}
  Let $\quat_{q_0}$ be the quaternions with a complex structure.  A
  complex spherical map $f(x) = qxz$ with $q$ a unit quaternion and
  $z$ a unit complex number is a complex reflection if and only if
  $\real(q) = \real(z)$.
\end{prop}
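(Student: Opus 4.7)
The plan is to read off both directions from the explicit formula $r_{p, z^2}(x) = (p z p^{-1}) x z$ given in Definition~\ref{def:complex-reflections-q}, using only that quaternionic conjugation preserves the real part of a quaternion and acts transitively on each sphere of pure imaginary quaternions.

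For the forward direction, if $f(x) = q x z$ is a complex reflection then it fixes some complex line $p\C$ pointwise, so by Definition~\ref{def:complex-reflections-q} it has the form $x \mapsto (p z p^{-1}) x z$ for an appropriate unit quaternion $p$. Comparing the two expressions forces $q = p z p^{-1}$, and since conjugation by a unit quaternion preserves the real part we obtain $\real(q) = \real(z)$.

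For the converse, suppose $\real(q) = \real(z)$. Since both are unit quaternions, the imaginary parts $\vec v = \imag(q)$ and $\vec u = \imag(z)$ are pure quaternions of the same length $\sqrt{1 - \real(z)^2}$. The standard fact that the unit quaternions act transitively by conjugation on each sphere in $\imag(\quat) \cong \R^3$, which is the double cover $\symp(1) \onto \sorth(3)$, supplies a unit quaternion $p$ with $p z p^{-1} = q$. Then $f(x) = q x z = (p z p^{-1}) x z$ is the complex reflection $r_{p, z^2}$.

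The only delicate step is producing the conjugating element $p$ in the converse. An explicit choice is the normalization of $1 + \vec v \vec u^{-1}$ when $\vec u \neq -\vec v$; the antipodal case is handled by conjugation by any pure imaginary unit orthogonal to $\vec u$, and the degenerate cases $z = \pm 1$ force $q = z$, in which event $f(x) = z x z = x$ is the identity and fits Definition~\ref{def:complex-reflections-q} trivially with $z^2 = 1$.
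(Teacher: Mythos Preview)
Your argument is correct and matches the paper's proof essentially line for line: both directions hinge on the formula $r_{p,z^2}(x)=(pzp^{-1})xz$ from Definition~\ref{def:complex-reflections-q}, together with the facts that conjugation preserves real parts and that unit quaternions act transitively by conjugation on each sphere of pure imaginaries. Your additional paragraph supplying an explicit conjugator $p$ and handling the degenerate cases $z=\pm 1$ goes slightly beyond what the paper records, but the strategy is identical.
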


\begin{proof}
  Both directions are easy quaternionic exercises.  In one direction
  conjugation by a quaternion does not change its real part so the
  formula given in Definition~\ref{def:complex-reflections-q} shows
  that every complex reflection satisfies $\real(q)=\real(z)$.  In the
  other direction, whenever there are two unit quaternions $q$ and $z$
  with the same real part, there is a third unit quaternion $p$ that
  conjugates $z$ to $q$, and once $qxz$ is rewritten as $(pzp^{-1})xz$
  it is clear that $f$ is the complex reflection $r_{p,z^2}$.
\end{proof}
  
Once translations are included in the discussion, these results about
isometries fixing the origin readily extend to arbitrary isometries of
the complex euclidean plane.

\begin{defn}[Translations]
  For every quaternion $q$ the \emph{translation} map $t_q(x) = x+q$
  is an orientation preserving isometry of the canonical euclidean
  structure of $\quat$.  When a spherical map is combined with
  translation by an arbitrary quaternion $q''$ we call the resulting
  function $f(x) = qxq'+q''$ a \emph{euclidean map}.  As was the case
  with spherical maps, every euclidean map is an orientation
  preserving euclidean isometry and every orientation preserving
  euclidean isometry can be represented as a euclidean map in
  precisely two ways (with the second representation obtained by
  negating $q$ and $q'$).
\end{defn}

Once we allow translations, the quaternions $\quat_{q_0}$ with a
complex structure can be identified as the complex euclidean plane.
The images of the complex lines $q\C$ under translation are called
\emph{affine complex lines} and they are sets of the form $q\C +v$.
Every translation preserves this complex euclidean structure and
Propositions~\ref{prop:complex-sph-iso} and~\ref{prop:complex-refl-q}
extend.

\begin{prop}[Complex euclidean isometries]\label{prop:complex-euc-iso}
  The euclidean maps that preserve the complex euclidean structure of
  $\quat_{q_0}$ are precisely those of the form $x \mapsto qxz +v$
  where $q$ is a unit quaternion and $z$ is a unit complex number in
  the chosen complex subalgebra and $v$ is arbitrary.
\end{prop}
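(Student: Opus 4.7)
The plan is to reduce to the spherical case by peeling off the translation part, exactly as in the passage from $\aff(E)$ to $V\rtimes GL(V)$ discussed earlier in the paper.

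\textbf{Easy direction.} Suppose $f(x)=qxz+v$ with $q$ a unit quaternion, $z$ a unit complex number in the chosen subalgebra, and $v\in\quat_{q_0}$ arbitrary. Writing $f=t_v\circ g$ where $g(x)=qxz$, Proposition~\ref{prop:complex-sph-iso} identifies $g$ as a complex spherical isometry, so in particular $g$ is an affine map that preserves the hermitian inner product. Translations are affine and preserve differences of points, hence preserve the hermitian pairing of the vectors $v_{x,x'}$ and $v_{y,y'}$ determined by quadruples of points. Composing, $f$ is a complex euclidean isometry of $\quat_{q_0}$.

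\textbf{Forward direction.} Let $f$ be a complex euclidean isometry of $\quat_{q_0}$, that is, a euclidean map $f(x)=qxq'+q''$ (with $q,q'$ unit quaternions and $q''$ arbitrary) that preserves the complex euclidean structure. Set $v=f(0)$ and define $g(x)=f(x)-v=t_{-v}\circ f(x)$. Then $g$ is a composition of two complex euclidean isometries, it fixes the origin, and it is still a euclidean map of the form $g(x)=qxq'$ since the translation component has been absorbed. A euclidean map fixing the origin is a spherical map, and a spherical map that preserves the complex euclidean structure preserves the hermitian inner product on $\quat_{q_0}$ viewed as a complex vector space based at $0$, so $g$ is a complex spherical isometry. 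Proposition~\ref{prop:complex-sph-iso} therefore forces $q'$ to lie in the chosen complex subalgebra, i.e. $q'=z$ is a unit complex number, and consequently $f(x)=qxz+v$ is of the asserted form.

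\textbf{Main obstacle and remarks on uniqueness.} The only delicate point is confirming that translations genuinely preserve the complex euclidean structure and that a complex euclidean isometry fixing the origin reduces to a complex spherical isometry; both are immediate from the definitions, since the hermitian inner product is evaluated on vector differences $v_{x,x'}$ and translation does not affect these differences. As with the spherical case, the representation $x\mapsto qxz+v$ is unique only up to the ambiguity $(q,z)\leftrightarrow(-q,-z)$ inherited from Definition~\ref{def:sph-maps}, while $v=f(0)$ is determined by $f$. This completes the characterization and shows that $\isom(\quat_{q_0})$ is parameterized, two-to-one, by $\sph^3\times\sph^1\times\quat$, in agreement with the identification $\isom(E)\cong V\rtimes U(V)$ from the general discussion of complex euclidean geometry.
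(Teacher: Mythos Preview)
Your proof is correct and follows exactly the approach the paper intends: the paper does not give an explicit proof of this proposition, merely remarking that ``Every translation preserves this complex euclidean structure and Propositions~\ref{prop:complex-sph-iso} and~\ref{prop:complex-refl-q} extend,'' and your argument is precisely the natural elaboration of that remark---peel off the translation $t_v$ with $v=f(0)$ and invoke Proposition~\ref{prop:complex-sph-iso} on the remaining spherical map.
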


\begin{prop}[Complex euclidean reflections]\label{prop:complex-euc-refl}
  Let $\quat_{q_0}$ denote the quaternions with a complex structure.
  A complex euclidean map $f(x) = qxz +v$ with $q$ a unit quaternion,
  $z$ a unit complex number and $v$ arbitrary is a complex euclidean
  reflection if and only if $f$ has a fixed point and $\real(q) =
  \real(z)$.
\end{prop}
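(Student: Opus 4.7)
The plan is to reduce the affine statement to the already-proven spherical one (Proposition~\ref{prop:complex-refl-q}) by translating the putative fixed point to the origin. By the definition of a complex euclidean reflection given earlier, $f$ is a complex euclidean reflection if and only if there is a choice of origin that identifies $E$ with $V$ so that $f$ becomes a complex reflection in the spherical sense of Definition~\ref{def:complex-reflections}. Since a spherical complex reflection $r_{u,w}$ fixes the origin (it pointwise fixes the hyperplane $u^{\perp}$), any complex euclidean reflection must have at least one fixed point, namely the chosen new origin, so the fixed-point hypothesis is necessary.

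For the forward direction, suppose $f$ is a complex euclidean reflection and let $x_0$ be a fixed point. Conjugating by the translation $t_{x_0}$ yields the map
\[
 g(x) \;=\; t_{-x_0}\circ f\circ t_{x_0}(x)\;=\;f(x+x_0)-x_0\;=\;qxz + (qx_0 z + v - x_0),
\]
and the constant term vanishes precisely because $f(x_0)=x_0$. Thus $g(x)=qxz$ is a complex spherical map fixing the origin, and by construction it is the map representing $f$ after the change of origin, hence a complex spherical reflection. Proposition~\ref{prop:complex-refl-q} then forces $\real(q)=\real(z)$.

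For the reverse direction, assume $f(x) = qxz + v$ has a fixed point $x_0$ and that $\real(q)=\real(z)$. The same conjugation yields $g(x)=qxz$, which by Proposition~\ref{prop:complex-refl-q} is a complex spherical reflection. Viewing $x_0$ as the new origin identifies $f$ with $g$, so $f$ satisfies the definition of a complex euclidean reflection.

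The only subtle point is bookkeeping: one has to verify that the affine conjugation indeed absorbs the translation part into the spherical-fixing-origin form, which it does once $f(x_0)=x_0$ is used. Everything else is a direct appeal to Proposition~\ref{prop:complex-refl-q}, and no new quaternionic computation is required beyond the one-line translate-and-cancel above.
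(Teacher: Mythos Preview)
Your proof is correct and matches the paper's intended approach. The paper does not give an explicit proof of this proposition; it simply remarks that once translations are included, Propositions~\ref{prop:complex-sph-iso} and~\ref{prop:complex-refl-q} ``readily extend'' to the euclidean setting, and your argument---conjugating by a translation at a fixed point to reduce to the spherical case and then invoking Proposition~\ref{prop:complex-refl-q}---is precisely that extension made explicit.
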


\section{The $24$-cell\label{sec:24-cell}}

This section describes the regular polytope called the $24$-cell and
introduces a novel technique for visualizing its structure.

\begin{defn}[The $24$-cell]\label{def:24-cell}
  The convex hull of the unit quaternions
  \[ \Phi = \{\pm 1, \pm i, \pm j, \pm k \} \cup \left\{\frac{\pm 1 \pm i
    \pm j \pm k}{2}\right\}\] is a $4$-dimensional regular polytope
  known as the \emph{$24$-cell} because it has $24$ regular octahedral
  facets.  The centers of these $24$ euclidean octahedra are at the
  points $\frac{i-j}{2} \cdot \Phi$, where $q \cdot \Phi$ denotes a
  scaled and rotated version of $\Phi$ obtained by left multiplying
  every element of $\Phi$ by a quaternion $q$.  In particular,
  $\frac{i-j}{2} \cdot \Phi$ consists of the $24$ quaternions of the
  form $\frac{\pm u \pm v}{2}$ for $u,v \in \{1,i,j,k\}$ with $u \neq
  v$.  We use $\Phi$ for this set because it is the conventional
  letter used for root systems and the type $D_4$ root system is the
  set $\Phi_{D_4} = (i-j) \cdot \Phi$.  We also note that the
  quaternions in $\Phi$ form a subgroup of $\quat$.
\end{defn}

We use elements in $\Phi$ to define a complex structure on $\quat$.

\begin{defn}[$\omega$ and $\zeta$]
  Let $\omega=\frac{-1+i+j+k}{2}$ and let $\zeta = \frac{1+i+j+k}{2}$,
  and note that $\omega$ is a cube-root of unity, $\zeta$ is a
  sixth-root of unity and $\zeta^2 = \omega$.  For the remainder of
  the article we give the quaternions the complex structure
  $\quat_\omega = \quat_\zeta$.  Since $\Phi$ is a group of order $24$
  and $\zeta$ is an element in $\Phi$ of order $6$, we can partition
  $\Phi$ into the four cosets $q \langle \zeta \rangle$ with $q \in
  \{1,i,j,k\}$.  Thus every element in $\Phi$ is of the form
  $q\zeta^\ell$ with $q \in \{1,i,j,k\}$ and $\ell$ an integer mod $6$
  and $\Phi$ is contained in the union of the four complex lines
  $1\C$, $i\C$, $j\C$ and~$k\C$.
\end{defn}

\begin{figure}
  \begin{center}
    \begin{tikzpicture}[scale=.64]
      \def\R{7cm} 
      \def\ra{3.3cm} 
      \def\rb{2.2cm} 
      \def\rc{1.732cm} 
      \def\ds{.5mm} 
      \foreach \l in {0,1,2} {
        \begin{scope}[shift=(120*\l-30:\R)]
          \filldraw[\greenA] (0,0) circle (3cm);
          \foreach \n in {0,1,2} {\draw[rotate=120*\n,blue!50,dashed] (3,0) arc (0:180:3cm and 1cm);}
          \foreach \n in {0,1,2} {\draw[rotate=120*\n,\greenA,line width=3pt] (210:\rc) arc (240:300:3cm and 1cm);}
          \foreach \n in {0,1,2} {\draw[rotate=120*\n,blue,semithick] (-3,0) arc (180:360:3cm and 1cm);}
          \foreach \n in {0,1,2} {
            \draw (120*\n+33:\rc) edge[RedDotted] (120*\n+87:\rc);
            \draw (120*\n-27:\rc) edge[BlueDotted] (120*\n+27:\rc);
            \filldraw[\greyA] (120*\n+30:\rc) circle (\ds);
            \filldraw[black] (120*\n-30:\rc) circle (\ds);
          }
        \end{scope}
        \begin{scope}[shift=(120*\l+30:\R)]
          \filldraw[\greenA] (0,0) circle (3cm);
          \foreach \n in {0,1,2} {\draw[rotate=120*\n,blue!50,dashed] (-3,0) arc (180:360:3cm and 1cm);}
          \foreach \n in {0,1,2} {\draw[rotate=120*\n,\greenA,line width=3pt] (30:\rc) arc (60:120:3cm and 1cm);}
          \foreach \n in {0,1,2} {\draw[rotate=120*\n,blue,semithick] (3,0) arc (0:180:3cm and 1cm);}
          \foreach \n in {0,1,2} {
            \draw (120*\n+33:\rc) edge[BlueDotted] (120*\n+87:\rc);
            \draw (120*\n-27:\rc) edge[RedDotted] (120*\n+27:\rc);
            \filldraw[black] (120*\n+30:\rc) circle (\ds);
            \filldraw[\greyA] (120*\n-30:\rc) circle (\ds);
          }
        \end{scope}
      }
      
      \foreach \l in {0,1,...,5} {
        \begin{scope}[shift=(60*\l-30:\R)]
          \foreach \x in {0,1,...,5} { 
	    \draw[RedSolid] (60*\x+3:3cm) arc (60*\x+3:60*\x+57:3cm);
	    \filldraw (60*\x:3) circle (\ds);
          }
          \draw (0:\ra) node {\figsize $1$};
          \draw (60:\ra) ++(.1,0) node {\figsize $\zeta$};
          \draw (120:\ra) node {\figsize $\zeta^2$};
          \draw (180:\ra) ++(-.15,0) node {\figsize $-1$};
          \draw (240:\ra) ++(-.2,-.1) node {\figsize $\zeta^4$};
          \draw (300:\ra) ++(.2,-.1) node {\figsize $\zeta^5$};
          \filldraw (0,0) circle (\ds) [color=\greyA];     
        \end{scope}
      }
      
      \begin{scope}[shift=(90:\R)]      
        \draw (90:\rb) node {\figsize $i$};
        \draw (210:\rb) ++(-.2,-.1) node {\figsize $j\zeta^2$};
        \draw (330:\rb) ++(.2,-.1) node {\figsize $k\zeta^4$};
        \draw (270:\rb) ++(-.1,-.1) node[\greyA] {\figsize $-k$};
        \draw (0,.1) node [anchor=north, \greyA] {$\frac{i-k}{2}$};
      \end{scope}	
      
      \begin{scope}[shift=(330:\R)]    
        \draw (90:\rb) ++(.05,.1) node {\figsize $j$};
        \draw (210:\rb) ++(-.2,-.1) node {\figsize $k\zeta^2$};
        \draw (330:\rb) ++(.17,0) node {\figsize $i\zeta^4$};
        \draw (270:\rb) ++(-.1,-.1) node[\greyA] {\figsize $-i$};
        \draw (0,.1) node [anchor=north, \greyA] {$\frac{j-i}{2}$} ;
      \end{scope}
      
      \begin{scope}[shift=(210:\R)]    
        \draw (90:\rb) node {\figsize $k$};
        \draw (210:\rb) ++(-.2,-.1) node {\figsize $i\zeta^2$};
        \draw (330:\rb) ++(.2,0) node {\figsize $j\zeta^4$};
        \draw (270:\rb) ++(-.1,-.1) node[\greyA] {\figsize $-j$};
        \draw (0,.1) node [anchor=north, \greyA] {$\frac{k-j}{2}$} ;
      \end{scope}
      
      \begin{scope}[shift=(270:\R)]    
        \draw (90:\rb) node[\greyA] {\figsize $k$};
        \draw (30:\rb) ++(.3,.1) node[black] {\figsize $j\zeta^5$};
        \draw (150:\rb) ++(-.1,.05) node[black] {\figsize $k\zeta$};
        \draw (270:\rb) ++(-.1,-.1) node[black] {\figsize $-i$};
        \draw (0,.1) node [anchor=north, \greyA] {$\frac{k-i}{2}$} ;
      \end{scope}
      
      \begin{scope}[shift=(150:\R)]      
        \draw (90:\rb) node[\greyA] {\figsize $i$};
        \draw (30:\rb) ++(.33,0) node[black] {\figsize $k\zeta^5$};
        \draw (150:\rb) ++(-.1,.1) node[black] {\figsize $i\zeta$};
        \draw (270:\rb) ++(-.1,-.1) node[black] {\figsize $-j$};
        \draw (0,.1) node [anchor=north, \greyA] {$\frac{i-j}{2}$} ;
      \end{scope}
      
      \begin{scope}[shift=(30:\R)]    
        \draw (90:\rb) ++(.05,.1) node[\greyA] {\figsize $j$};
        \draw (30:\rb) ++(.33,0) node[black] {\figsize $i\zeta^5$};
        \draw (150:\rb) ++(-.1,.1) node[black] {\figsize $j\zeta$};
        \draw (270:\rb) ++(-.1,-.1) node[black] {\figsize $-k$};
        \draw (0,.1) node [anchor=north, \greyA] {$\frac{j-k}{2}$} ;
      \end{scope}
    \end{tikzpicture}
    \end{center}
  \caption{Six lenses that together display the structure of the
    $24$-cell.  Each figure represents a one-sixth lens in the
    $3$-sphere with dihedral angle $\frac{\pi}{3}$ between its front
    and back hemispheres.  They are arranged so that every front
    hemisphere is identified with the back hemisphere of the next one
    when ordered in a counter-clockwise way.\label{fig:lenses}}
  \end{figure}
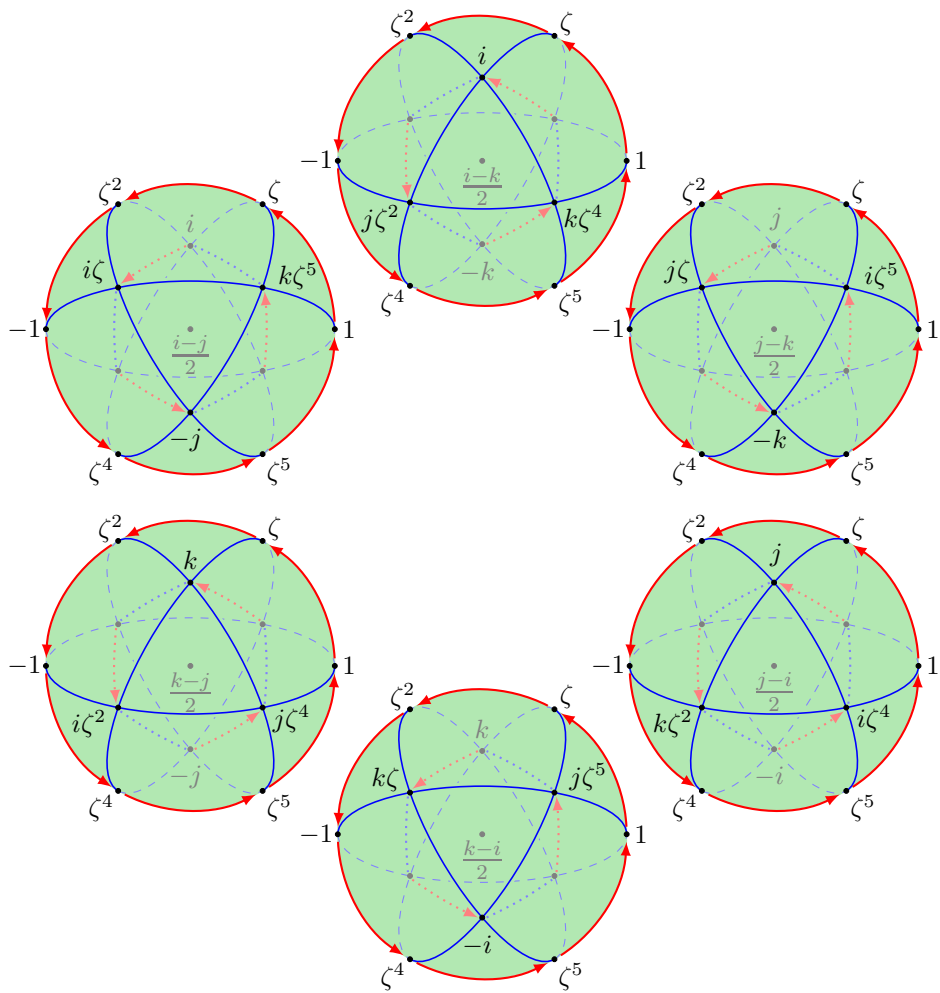

In 2007 John Meier and the second author developed a technique for
visualizing the regular $4$-dimensional polytopes as a union of
spherical lenses that has been very useful for understanding the
various groups that act on these polytopes.  To our knowledge this is
the first time that this technique has appeared in print.

\begin{defn}[Lunes and Lenses]\label{def:lune-lens}
  A \emph{lune} is a portion of a $2$-sphere bounded by two
  semicircular arcs with a common $0$-sphere boundary and its shape is
  completely determined by the angle at this these semicircles meet.
  A lens is a $3$-dimensional analog of a lune.  Concretely, a
  \emph{lens} is a portion of the $3$-sphere determined by two
  hemispheres sharing a common great circle boundary and the shape of
  a lens is completely determined by the dihedral angle between these
  hemispheres along the great circle where they meet.
\end{defn}

In the same way that lunes can be used to display the map of a
$2$-sphere such as the earth in $\R^2$ with very little distortion,
lenses can be used to display a map of the $3$-sphere in $\R^3$ with
very little distortion.

\begin{defn}[$6$ lenses]\label{def:6-lenses}
  To visualize the structure of the $24$-cell it is useful to use the
  $6$ lenses displayed in Figure~\ref{fig:lenses}.  Each of the six
  figures represents one-sixth of the $3$-sphere.  The outside circle
  is a great circle in $\sph^3$, the solid lines live in the
  hemisphere that bounds the front of the lens, the dashed lines live
  in the hemisphere that bounds the back of the lens and the dotted
  lines live in the interior of the lens.  The dihedral angle between
  the front and back hemispheres, along the outside boundary circle is
  $\frac{\pi}{3}$ and all the edges are length $\frac{\pi}{3}$.  The
  six lenses are arranged so that the front hemisphere of each lens is
  identified with the back hemisphere the next one in
  counter-clockwise order.  Each lens contains one complete octahedral
  face at its center and six half octahedra, three bottoms halves
  corresponding to the squares in the front hemisphere and three top
  halves corresponding to the squares in the back hemisphere.  The
  label at the center of each lens is the coordinate of the center of
  the euclidean octahedron spanned by the six nearby vertices. The
  arrows in Figure~\ref{fig:lenses} indicate how the $24$ vertices
  move under the map $R_\zeta$ which right multiplies by $\zeta$.  The
  arrows glue together form four oriented hexagons with vertices $q
  \langle \zeta\rangle$ that live in the four complex lines $q\C$
  where $q$ is $1$, $i$, $j$ or $k$.
\end{defn}

\section{The group $\refl(G_4)$\label{sec:G4}}

In this section the complex spherical reflection group $\refl(G_4)$ is
defined and its natural action on the $24$-cell is investigated.

\begin{defn}[The group $\refl(G_4)$]
  The group $\refl(G_4)$ is defined to be the complex spherical
  reflection group generated by the order~$3$ reflections
  $r_{1,\omega}(x) = \zeta x \zeta$ and $r_{i,\omega}(x) = \zeta^i x
  \zeta$, where $\zeta^i = (-i) \zeta i = i \zeta (-i) = \zeta^{-i}$
  is the conjugation of $\zeta$ by $\pm i$.  For simplicity we
  abbreviate these as $r_1 = r_{1,\omega}$ and $r_i = r_{i,\omega}$.
  The resulting group also includes the order $3$ reflections $r_j =
  r_{j,\omega}(x) = \zeta^j x \zeta$ and $r_k = r_{k,\omega}(x) =
  \zeta^k x \zeta$ as well as the reflections $r_q^2 = r_{q,\omega^2}$
  for $q \in \{1,i,j,k\}$.  It turns out that this group includes the
  map which (left or right) multiplies by $-1$, so it also includes the
  negatives of these eight reflections, which are no longer
  reflections.  Finally, $\refl(G_4)$ contains elements which left
  multiply by $\pm q$ with $q \in \{1,i,j,k\}$.  Thus the full list of
  all $24$ elements in $\refl(G_4)$ is $\{\pm L_q\} \cup \{ \pm r_q \}
  \cup \{ \pm r_q^2 \}$ with $q \in \{1,i,j,k\}$.
\end{defn}

\begin{rem}[Binary tetrahedral group]\label{rem:binary-tetra}
 The group formed by the elements in $\Phi$ is called the \emph{binary
   tetrahedral group} and it can be identified with the group of left
 multiplications $L_q$ with $q \in \Phi$ acting freely on $\Phi$,
 preserving the complex structure $\quat_\omega$.  Its name derives
 from the fact that it is the inverse image of the rotation group of
 the regular tetrahedron under the Hopf fibration.  Note that although
 the binary tetrahedral group and the complex spherical reflection
 group $\refl(G_4)$ both have size $24$ and both act freely on the set
 $\Phi$, their actions are distinct since every element of the former
 has a fixed-point free action on all of $\sph^3$ while the
 reflections in the latter pointwise fix complex lines.  Both groups
 can be viewed as index $3$ subgroups of the group of size $72$ that
 stabilizes $\Phi$ setwise and preserves the complex structure, or as
 subgroups of the full isometry group of the $24$-cell of size $1152$,
 also known as the Coxeter group of type $F_4$.
\end{rem}

We use the lens diagram in Figure~\ref{fig:lenses} to understand the
points in the $24$-cell that are fixed by some reflection in
$\refl(G_4)$.

\begin{rem}[Fixed points]\label{rem:g4-fixed-points}
  Consider the reflection $r_1$ in the group $\refl(G_4)$.  It rotates
  the complex line $1\C$ through an angle of $\frac{2\pi}{3}$ and
  pointwise fixes the orthogonal complex line, which in this case is
  the line $(i-j)\C$.  In Figure~\ref{fig:lenses}, each of the six
  lenses is stablized and rotated.  In the top lens, for example, $1$
  goes to $\zeta^2$, which goes to $\zeta^4$, which goes to $1$ and
  $i$ goes to $j\zeta^2$, which goes to $k\zeta^4$, which goes to $i$.
  The fixed portion of each lens is the line segment connecting the
  center of the back hemisphere to the center of the front hemisphere
  through the center of the octahedron.  The six fixed arcs in the six
  lenses glue together to form a single fixed circle or a single fixed
  hexagon, depending on whether this figure is viewed as representing
  the $3$-sphere through the points $\Phi$ or as a slight distortion
  of portions of the piecewise euclidean boundary of the $24$-cell
  with vertices $\Phi$, respectively. The other reflections $r_q$ with
  $q \in \{i,j,k\}$, being conjugates of $r_1$, are geometrically
  similar but their action is slightly harder to see.  Basically,
  $r_q$ rotates the complex line $q\C$ and it cyclically permutes the
  other three complex lines.  Every octahedron contains parts of three
  complex lines in its $1$-skeleton and the six octahedra that contain
  parts of the three other lines form a solid ring or \emph{necklace},
  overlapping on triangles, which contains the circle/hexagon
  orthogonal to the line $q\C$ in its interior as in
  Figure~\ref{fig:necklace}.  Concretely, the fixed hyperplanes for
  the reflections $r_1$, $r_i$, $r_j$ and $r_k$ are $(i-j)\C$,
  $(1+k)\C$, $(1-k)\C$ and $(i+j)\C$, respectively.
\end{rem}

\begin{figure}
  \centering
  \begin{tikzpicture}[scale=.9]
    \newcommand{\xangle}{7}
    \newcommand{\yangle}{145}
    \newcommand{\zangle}{90}
    \newcommand{\xlength}{1}
    \newcommand{\ylength}{0.5}
    \newcommand{\zlength}{1}
    \newcommand{\hei}{1.6}
    \newcommand{\spread}{1.6}
    \pgfmathsetmacro{\xx}{\xlength*cos(\xangle)}
    \pgfmathsetmacro{\xy}{\xlength*sin(\xangle)}
    \pgfmathsetmacro{\yx}{\ylength*cos(\yangle)}
    \pgfmathsetmacro{\yy}{\ylength*sin(\yangle)}
    \pgfmathsetmacro{\zx}{\zlength*cos(\zangle)}
    \pgfmathsetmacro{\zy}{\zlength*sin(\zangle)}
    \pgfmathsetseed{1}
    
    \foreach \cola/\colb/\colc/\cold/\xshif in {
      br1/bg1/bb1/bo1/-3*\spread,
      bo1/bb1/bg1/br1/-1*\spread,
      bb1/br1/bo1/bg1/\spread,
      bo1/bg1/br1/bb1/3*\spread}{
      \begin{scope}[shift={(\xshif,0)}, scale=1,   x={(\xx cm,\xy cm)}, y={(\yx cm,\yy cm)}, z={(\zx cm,\zy cm)},]
        \filldraw[\cold, double, thick, double distance=1pt] (0,0,0)--(0,0,6*\hei);
        \draw (60:1)--(180:1)--(300:1)--cycle;
        \foreach \x/\y/\name in {60/1/2,180/1/0,300/1/1}{\coordinate (v\name0) at (\x:\y);}
        \begin{scope}[shift={(0,0,\hei)}]
          \draw (0:1)--(120:1)--(240:1)--cycle;
          \foreach \x/\y/\name in {0/1/1,120/1/2,240/1/0}{\coordinate  (v\name1) at (\x:\y);}
        \end{scope}
        \begin{scope}[shift={(0,0,2*\hei)}]
          \draw (60:1)--(180:1)--(300:1)--cycle;
          \foreach \x/\y/\name in {60/1/1,180/1/2,300/1/0}{\coordinate  (v\name2) at (\x:\y);}
        \end{scope}
        \begin{scope}[shift={(0,0,3*\hei)}]
          \draw  (0:1)--(120:1)--(240:1)--cycle;
          \foreach \x/\y/\name in {0/1/0,120/1/1,240/1/2}{\coordinate  (v\name3) at (\x:\y);}
        \end{scope}
        \begin{scope}[shift={(0,0,4*\hei)}]
          \draw (60:1)--(180:1)--(300:1)--cycle;
          \foreach \x/\y/\name in {60/1/0,180/1/1,300/1/2}{\coordinate  (v\name4) at (\x:\y);}
        \end{scope}
        \begin{scope}[shift={(0,0,5*\hei)}]
          \draw (0:1)--(120:1)--(240:1)--cycle;
          \foreach \x/\y/\name in {0/1/2,120/1/0,240/1/1}{\coordinate  (v\name5) at (\x:\y);}
        \end{scope}
        \begin{scope}[shift={(0,0,6*\hei)}]
          \draw[white, line width=.1em] (60:1)--(180:1)--(300:1)--cycle;
          \draw (60:1)--(180:1)--(300:1)--cycle;
          \foreach \x/\y/\name in {60/1/2,180/1/0,300/1/1}{\coordinate  (v\name6) at (\x:\y);}
        \end{scope}
        \draw[\colb , thick, dashed] (v03)--(v04)--(v05);
        \draw[\colc , thick, dashed] (v11)--(v12)--(v13);
        \draw[\cola , thick, dashed] (v20)--(v21) (v25)--(v26);
        \draw[dashed] (v21)--(v12)--(v03);
        \draw[dashed] (v13)--(v04)--(v25);
        \draw[dashed] (v20)--(v11) (v05)--(v26);
        \draw[white, line width=.15em] (v00)--(v21) (v03)--(v24)--(v15)--(v06);
        \draw (v00)--(v21) (v03)--(v24)--(v15)--(v06);
        \draw[white, line width=.15em] (v10)--(v01)--(v22)--(v13);
        \draw (v10)--(v01)--(v22)--(v13) (v25)--(v16);
        \draw[white, line width=.15em] (v11)--(v02)--(v23)--(v14)--(v05);
        \draw (v11)--(v02)--(v23)--(v14)--(v05);
        \draw[white, line width=.15em] (v21)--(v22)--(v23)--(v24)--(v25);
        \draw[\cola, thick] (v21)--(v22)--(v23)--(v24)--(v25);
        \draw[white, line width=.15em] (v10)--(v11) (v13)--(v14)--(v15)--(v16);
        \draw[\colc, thick] (v10)--(v11) (v13)--(v14)--(v15)--(v16);
        \draw[white, line width=.15em] (v00)--(v01)--(v02)--(v03);
        \draw[\colb, thick] (v00)--(v01)--(v02)--(v03) (v05)--(v06);
        \foreach \h in {0,2,4,6}{
          \draw[white, line width=.15em] (v0\h)--(v1\h)--(v2\h)--(v0\h);
          \draw(v0\h)--(v1\h)--(v2\h)--(v0\h);}
        \newcommand{\wid}{1.8}
        \draw[white, line width =.15em] (v01)--(v11) (v23)--(v03) (v15)--(v25);
        \draw (v01)--(v11) (v23)--(v03) (v15)--(v25);  
        \foreach \a in {0,1,2}{
          \foreach \b in {0,1,2,3,4,5,6}{
            \filldraw[white] (v\a\b) circle (2pt);
            \draw[black,thick] (v\a\b) circle(2pt);}}
        \foreach \x in {0,2*\hei, 4*\hei}{
          \filldraw[white, line width=3pt] (0,0,\x-.05)--(0,0,\x+.75);
          \filldraw[\cold, double, thick, double distance=1pt] (0,0,\x-.05)--(0,0,\x+.75);}
        \foreach \x in {\hei, 3*\hei,5*\hei}{
          \filldraw[white, line width=3pt] (0,0,\x+.11)--(0,0,\x+.75);
          \filldraw[\cold, double, thick, double distance=1pt] (0,0,\x+.1)--(0,0,\x+.75);}
      \end{scope}
    }
  \end{tikzpicture}
  \caption[The octahedral necklaces]{The $4$ octahedral necklaces
    centered around the fixed orthogonal circles/hexagons are created
    by identifying the top and bottom triangle in each pillar.  The
    triangles in the boundaries of the necklaces can be pairwise
    identified to form the boundary of the $24$-cell homemorphic to a
    $3$-sphere.\label{fig:necklace}}
\end{figure}
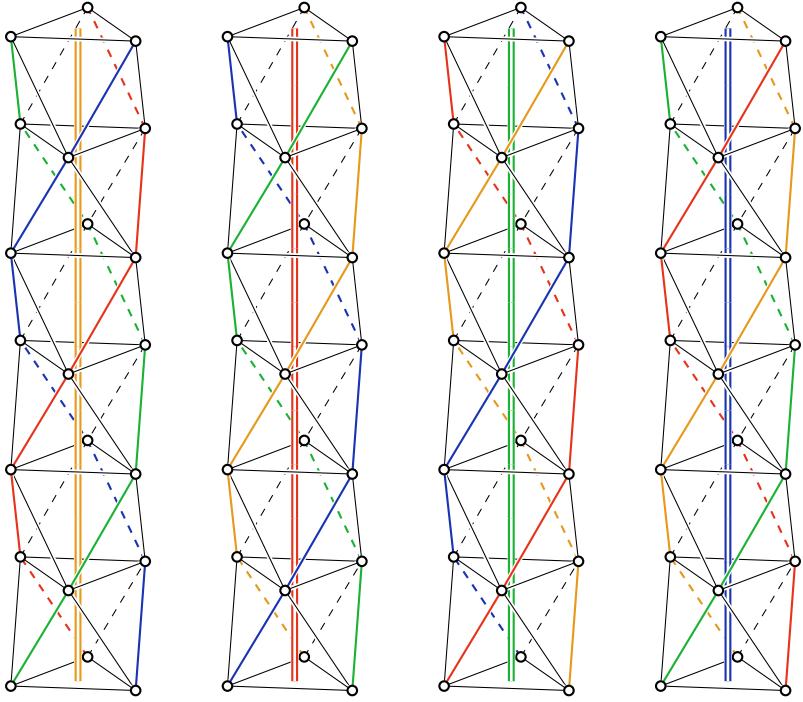

The portion of the $24$-cell that avoids the fixed hyperplanes of the
reflections in $\refl(G_4)$ is of particular interest.

\begin{defn}[The complement complex $K_0$]\label{def:K0}
  Let $P$ be the $24$-cell whose vertices are the quaternions in
  $\Phi$ and let $K_0$ be the cell complex formed by the union of the
  faces of $P$ that do not intersect the fixed hyperplanes of the
  reflections in $\refl(G_4)$.  The interior, all $24$ octahedral
  facets, and some of the equilateral triangles are removed while the
  entire $1$-skeleton and some of the triangles remain.  From the
  description of the fixed hyperplanes given in
  Remark~\ref{rem:g4-fixed-points} we see that a triangular face of
  $P$ is excluded precisely when all three of its vertices belong to
  distinct complex lines and it is included when two of the vertices
  belong to the same complex line.  In Figure~\ref{fig:lenses} the
  included triangles can be characterized as those which contain an
  arrow (representing right multiplication by $\zeta$) as one of its
  edges.
\end{defn}

The complex $K_0$ has a number of nice properties including being
non-positively curved.

\begin{rem}[Non-positive curvature]\label{rem:npc}
  We have chosen not to include a detailed review of the notions of
  $\cat(0)$ and non-positive curvature because we only need an easily
  described special case of the theory.  In any piecewise euclidean
  $2$-complex, the link of a vertex is the metric graph of points
  distance $\epsilon$ from the vertex for some small $\epsilon$ that
  is then rescaled so that the length of each arc is equal to the
  radian measure of the angle at the corner of the polygon to which it
  corresponds.  Such a metric graph is said to be $\cat(1)$ when it
  does not contain any simple loop of length strictly less than $2\pi$
  and a piecewise euclidean $2$-complex is called \emph{non-positively
    curved} when every vertex link is $\cat(1)$.  The universal cover
  of a non-positively curved $2$-complex is contractible and it is
  satisfies the definition of being a complete $\cat(0)$ space.
  Finally, a group that acts geometrically on a complete $\cat(0)$
  space is called a \emph{$\cat(0)$ group}.
\end{rem}

\begin{thm}[The complement complex $K_0$]\label{thm:K0}
  The hyperplane complement of $\refl(G_4)$ deformation retracts onto
  a non-positively curved piecewise euclidean $2$-complex $K_0$
  contained in the boundary of the $24$-cell in which every $2$-cell
  is an equilateral triangle and every vertex link is a subdivided
  theta graph.
\end{thm}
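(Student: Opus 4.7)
The plan is to produce the deformation retraction in two stages. First, I would radially rescale $\C^2$ via the Minkowski gauge of the $24$-cell $P$, giving a deformation retraction of $\C^2\setminus\{0\}$ onto $\partial P$; since every fixed hyperplane of $\refl(G_4)$ is a complex line through the origin and hence $\R_{>0}$-invariant, this map restricts to a deformation retraction of the hyperplane complement onto $\partial P\setminus H'$, where $H'$ is the union of the four fixed hexagons described in Remark~\ref{rem:g4-fixed-points}. Second, I would work locally inside each of the $24$ octahedral facets of $P$. By Remark~\ref{rem:g4-fixed-points} the fixed-hyperplane locus meets each facet in a single diagonal segment connecting the centers of two antipodal triangular faces, so I would first retract the octahedron outward from that diagonal onto its boundary $2$-sphere minus those two triangle centers, and then retract each punctured triangle radially from its center onto its three bounding edges. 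These local retractions fix the $1$-skeleton and every ``good'' triangle pointwise, so they glue across shared triangular faces into a global deformation retraction whose image is precisely~$K_0$.

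Each $2$-cell of $K_0$ is a triangular face of $P$ with three unit-length edges from $\Phi$ and so is equilateral with interior angles $\frac{\pi}{3}$. To pin down the vertex link, I would fix $v=1\in\Phi$ and parametrize its $8$ neighbors in $\Phi$ as sign triples $(s_i,s_j,s_k)\in\{\pm\}^3$ via $v_{s_is_js_k}=\frac{1+s_i\,i+s_j\,j+s_k\,k}{2}$, where adjacency in the vertex-figure cube corresponds to flipping a single sign. A short quaternionic computation shows that the two neighbors in $1\C$ are the antipodal triples $(+{+}{+})$ and $(-{-}{-})$, while the two neighbors in each of $i\C$, $j\C$, $k\C$ form a single cube edge obtained by flipping the appropriate sign. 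Among the $12$ cube edges, exactly three join two non-$1\C$ pairs --- one between each unordered pair of non-$v$ lines --- and these correspond precisely to the triangles at $v$ whose three vertices lie in three distinct complex lines, hence to the triangles excluded from $K_0$. Deleting them leaves a graph with two vertices of valence $3$ and six of valence $2$, organised as three length-$3$ arcs from $(+{+}{+})$ to $(-{-}{-})$: a theta graph with each of its three arcs subdivided twice.

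For non-positive curvature I would invoke Remark~\ref{rem:npc}: each link edge has metric length $\frac{\pi}{3}$, so each of the three arcs has length $\pi$ and each of the three simple loops has length exactly~$2\pi$. Since no shorter simple loop exists, every vertex link is $\cat(1)$ and $K_0$ is non-positively curved. The main technical hurdle should be the geometric input underlying the second stage --- verifying that the fixed-hyperplane diagonal through each octahedron really meets the boundary at the centers of the two antipodal triangles whose three vertices lie in three distinct complex lines, and that the two local retractions on opposite sides of any shared bad triangle both prescribe the same radial retraction of the punctured triangle onto its boundary. Once this is in place the construction globalises, and the remaining work is the routine combinatorics on the cube vertex figure at a single vertex, extended to the whole complex by the symmetry of the action of $\refl(G_4)$.
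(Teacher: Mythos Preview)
Your proposal is correct and follows essentially the same two-stage strategy as the paper's proof: a radial retraction of the hyperplane complement onto $\partial P$ (your Minkowski-gauge formulation is a clean way to say this), followed by a local retraction inside each pierced octahedron away from the fixed diagonal onto the annulus of six good triangles, together with the link computation yielding the subdivided theta graph. Your treatment is somewhat more explicit than the paper's---in particular the cube parametrization of the eight neighbors of $1$ by sign triples and the identification of the three deleted cube-edges is a nice concrete verification of the link structure that the paper only asserts---but the architecture of the argument is the same.
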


\begin{proof}
  The deformation retraction from the hyperplane complement to $K_0$
  comes from our description of how the fixed hyperplanes of the
  reflections in $\refl(G_4)$ intersect the $24$-cell.  More
  explicitly, since the origin belongs to all $4$ fixed hyperplanes,
  we can radially deformation retract the hyperplane complement onto
  the boundary of the $24$-cell, away from the origin (and from
  $\infty$) and the missing hyperplanes correspond to four missing
  hexagons running through the centers of the four solid rings formed
  out of six octahedra each.  See Figure~\ref{fig:necklace}. The
  second step radially deformation retracts from these missing
  hexagons onto the $2$-complex $K_0$.  The punctured triangles
  retract onto their boundary and the pierced octahedra retract on the
  annulus formed by the six triangles which contain an arrow as an
  edge.  Finally, each vertex of $K_0$ is part of $9$ triangles and
  its link is a theta-graph consisting of three arcs of length $\pi$
  sharing both endpoints, subdivided into subarcs of length
  $\frac{\pi}{3}$.  Since the vertex links contain no simple loops of
  length less than $2\pi$, the complex itself is non-positively
  curved.
\end{proof}

As a corollary of Theorem~\ref{thm:K0} we get a detailed description
of the corresponding braid group $\braid(G_4)$.

\begin{cor}[The group $\braid(G_4)$]
  The group $\braid(G_4)$ is a $\cat(0)$ group isomorphic to the
  three-strand braid group and it is defined by the presentation
  $\langle a,b,c,d \mid abd, bcd, cad \rangle.$
\end{cor}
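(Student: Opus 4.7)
The plan is to identify $\braid(G_4)$ with the fundamental group of the orbit space $K_0/\refl(G_4)$, read a presentation directly off the cell structure of that quotient, and then recognize the resulting group as $\braid_3$.

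First I would apply Steinberg's theorem (invoked in the introduction): for a complex spherical reflection group the set of regular points coincides with the hyperplane complement, so the action of $\refl(G_4)$ on the hyperplane complement is free. The radial retractions used in the proof of Theorem~\ref{thm:K0} are performed from the origin and from the four missing ``orthogonal hexagons'', all of which are $\refl(G_4)$-invariant, so the retraction is $\refl(G_4)$-equivariant. Consequently $K_0$ lies in the hyperplane complement, the action on $K_0$ is free, and the hyperplane complement modulo $\refl(G_4)$ deformation retracts onto $K_0/\refl(G_4)$. Since $K_0$ is non-positively curved (Theorem~\ref{thm:K0}), its universal cover is a $\cat(0)$ space on which $\pi_1(K_0/\refl(G_4)) = \braid(G_4)$ acts properly discontinuously and cocompactly by isometries; this gives the $\cat(0)$ part of the corollary.

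Second I would count orbits. The group $\refl(G_4)$ has order $24$ and acts freely and transitively on the $24$ vertices of $\Phi$. The complex $K_0$ has $24$ vertices, $96$ edges (the full $1$-skeleton of the $24$-cell), and $72$ triangles (from the proof of Theorem~\ref{thm:K0}, each vertex lies in $9$ triangles, giving $24 \cdot 9/3 = 72$). Freeness on vertices forces freeness on all cells, so the quotient $K_0/\refl(G_4)$ is a $2$-complex with exactly $1$ vertex, $4$ edges and $3$ triangles. Labeling the four edge orbits $a,b,c,d$ and reading off the boundary words of the three triangle orbits therefore produces a presentation of $\braid(G_4)$ with four generators and three length-$3$ relators. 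The bookkeeping step --- matching orbits of oriented edges to the letters $a,b,c,d$ and identifying the three triangle relators as $abd$, $bcd$ and $cad$ --- is where I expect the main obstacle to lie, and I would carry it out using the explicit $\refl(G_4)$-action on $\Phi$ together with the characterization in Definition~\ref{def:K0} of the included triangles as those containing an intra-line (``arrow'') edge. A natural bookkeeping device is to take a single vertex $v \in \Phi$, list the nine triangles at $v$ grouped by which arrow they contain, and use the transitivity of $\refl(G_4)$ on vertices to reduce each to one of three orbit representatives.

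Finally I would verify that the presentation $\langle a,b,c,d \mid abd, bcd, cad\rangle$ defines $\braid_3$. The three relations give $d = b^{-1}a^{-1} = c^{-1}b^{-1} = a^{-1}c^{-1}$, which is equivalent to $ab = bc = ca$. Eliminating $c = b^{-1}ab$ and $d = b^{-1}a^{-1}$ and substituting into $bc = ca$ yields $bab = aba$, so the group simplifies to $\langle a,b \mid aba = bab\rangle$, the classical presentation of the three-strand braid group. Combining this with the $\cat(0)$ statement from the first paragraph completes the corollary.
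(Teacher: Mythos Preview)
Your proposal is correct and follows essentially the same route as the paper: Steinberg's theorem identifies the regular set with the hyperplane complement, the equivariant retraction onto $K_0$ makes $\braid(G_4)\cong\pi_1(K_0/\refl(G_4))$, the free action on the $24$ vertices yields a one-vertex quotient with four edges and three triangles giving the stated presentation, and non-positive curvature of $K_0$ supplies the $\cat(0)$ conclusion. The only differences are cosmetic---you spell out the cell counts and the reduction from $ab=bc=ca$ to $aba=bab$, while the paper simply cites the latter as the dual presentation of $\braid_3$.
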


\begin{proof}
  By Steinberg's Theorem the hyperplane complement is the same as the
  space of regular points in this case and by Theorem~\ref{thm:K0} the
  quotient of $K_0$ by the action of $\refl(G_4)$ is homotopy
  equivalent to the space of regular orbits for $\refl(G_4)$.  In
  particular, the fundamental group of the quotient is isomorphic to
  $\braid(G_4)$.  The quotient of the $2$-complex $K_0$ by the free
  action of $\refl(G_4)$ yields a one vertex complex with four edges
  and three equilateral triangles.  The presentation is read off from
  this quotient with the three relations corresponding to the three
  triangles and, once $d$ is solved for and eliminated, the relations
  reduce to $ab=bc=ca$ which is the dual presentation for the
  three-strand braid group.  Finally, since $K_0$ is non-positively
  curved, so is its quotient and its universal cover is $\cat(0)$.
  The free and cocompact action of $\braid(G_4)$ on $\wt K_0$ shows
  that it is a $\cat(0)$ group.
\end{proof}

The fact that the braid group of $\refl(G_4)$ is isomorphic to the
$3$-strand braid group is well-known \cite{Ban76,BMR95,BMR98}.  The
novelty of our presentation is that we use an explicit piecewise
euclidean $2$-complex in the $2$-skeleton in the boundary of the
$24$-cell to establishes this connection.

\section{The group $\refl(\wt G_4)$\label{sec:wtG4}}

This section defines the group $\refl(\wt G_4)$ and establishes key
facts about its translations, its reflections and their fixed
hyperplanes and intersections, as well as the structure of its Voronoi
cells.

\begin{defn}[The group $\refl(\wt G_4)$]
  Let $\refl(\wt G_4)$ denote the group generated by the reflections
  $r_1$, $r_i$ and $r_1' = t_{1+k} \circ r_1 \circ t_{1+k}^{-1} = t_2
  \circ r_1$.  The first two generate $\refl(G_4)$ as before and the
  third, $r_1'(x) = \zeta x \zeta + 2$, is a complex euclidean
  reflection whose action on $\quat_\omega$ is a translated version of
  $r_1$.  The first equation shows that $r_1'$ is a complex euclidean
  reflection fixing $1+k$ and the equality of the two is an easy
  computation.
\end{defn}

One can also write $r'_1= t_{1+i} \circ r_1 \circ t_{1+i}^{-1}$.  Our
choice of $t_{1+k}$ as the conjugating translation is motivated by the
following computation.

\begin{exmp}[An isolated fixed point]\label{ex:1+k}
  The sets $\fix(r_1) = (i-j)\C$, $\fix(r_i) = (1+k)\C$ and
  $\fix(r'_1) = (1+k) + (i-j)\C$ can be described as
  \[\fix(r_1) = \{ a+bi+cj+dk \mid a=0, b+c+d=0\},\]
  \[\fix(r_i) = \{ a+bi+cj+dk \mid b=0, a+c-d=0\},\]  
  and 
  \[\fix(r'_1) = \{ a+bi+cj+dk \mid a=1, b+c+d=1\}.\] 
  Solving these equations, one finds that $1+k \in \Phi_{D_4}$ is the
  unique point in the intersection $\fix(r'_1) \cap \fix(r_i)$.  Thus
  $r_1'$ and $r_i$ generate a copy of $\refl(G_4)$ that uses $1+k$ as
  its origin.
\end{exmp}

The complex spherical reflection group $\refl(G_4)$ acts on the root
system $\Phi$ and the complex euclidean reflection group $\refl(\wt
G_4)$ acts on the Hurwitzian integers they generate.

\begin{defn}[Hurwitzian integers]
  The $\Z$-span of $\Phi$ inside $\quat$ is the set $\Lambda$ of
  \emph{Hurwitizian integers}.  It consists of all quaternions of the
  form $\frac{a+bi+cj+dk}{2}$ where $a$, $b$, $c$ and $d$ are all even
  integers or all odd integers.  Our notation is derived from the
  theory of Coxeter groups.  The $\Z$-span of a root system $\Phi$ is
  is its \emph{root lattice} $\Lambda$ and, as with $\Phi$, we write
  $q \cdot \Lambda$ for the $\Z$-span of $q\cdot \Phi$ and
  $\Lambda_{D_4} = \{ (a,b,c,d) \in \Z^4 \mid a+b+c+d \in 2\Z \}$ for
  the $\Z$-span of $\Phi_{D_4}$.  We note that $2\cdot \Lambda \subset
  \Lambda_{D_4} \subset \Lambda$ and that each is an index $4$ subset
  of the next.
\end{defn}

The Hurwitizian integers $\Lambda$ has many nice properties including
that they form a subring of the quaternions with $\Phi$ as its group
of units, every element has an integral norm, and it satisfies a
noncommutative version of the euclidean algorithm.  See \cite[Chapter
  $5$]{CoSm03} for details.  The remainder of the section records
basic facts about the action of $\refl(\wt G_4)$ on $\quat_\omega$.

\begin{fact}[Translations]\label{fact:translations}
  The translations in the group $\refl(\wt G_4)$ are those of the form
  $t_q$ with $q \in 2 \cdot \Lambda$.
\end{fact}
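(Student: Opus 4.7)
The plan is to establish the two inclusions between the translation subgroup $T := \{q \in \quat : t_q \in \refl(\wt G_4)\}$ and $2\cdot\Lambda$. The starting point is that $r_1' = t_2 \circ r_1$ by definition, so $t_2 = r_1' \circ r_1^{-1} \in \refl(\wt G_4)$ and $2 \in T$. Because $T$ is the kernel of the linear-part homomorphism $\refl(\wt G_4) \onto \refl(G_4)$, it is normal, and a direct computation shows that conjugation of $t_w$ by any element with linear part $L \in \refl(G_4)$ yields $t_{L(w)}$. Thus the set $T \subset \quat$ is closed under addition and under the induced $\refl(G_4)$-action on $\quat$.

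For the inclusion $2\cdot\Lambda \subseteq T$, it then suffices to show that the $\refl(G_4)$-orbit of $2$ generates $2\cdot\Lambda$ additively. The left multiplications $L_q$ with $q \in \{\pm 1, \pm i, \pm j, \pm k\}$ send $2$ to $\pm 2, \pm 2i, \pm 2j, \pm 2k$, which additively span the sublattice of integer quaternions with all-even coordinates. The reflection $r_1$ sends $2$ to $\zeta \cdot 2 \cdot \zeta = 2\zeta^2 = 2\omega = -1+i+j+k$, a vector with all-odd coordinates. Since $2\cdot\Lambda$ consists exactly of the integer quaternions whose coordinates are either all even or all odd, these orbit elements together generate $2\cdot\Lambda$.

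For the reverse inclusion $T \subseteq 2\cdot\Lambda$ I would induct on the length of a word in the generators $r_1, r_i, r_1'$ and their inverses. Each such generator has the form $x \mapsto qxz + v$ with $q, z \in \Phi$ and $v \in 2\cdot\Lambda$: one has $v = 0$ for $r_1^{\pm 1}$ and $r_i^{\pm 1}$, $v = 2$ for $r_1'$, and a direct calculation gives $v = 1+i+j+k$ for $(r_1')^{-1}$. The composition of two maps of this form has translation part $qv'z + v$, the image of $v'$ under the element $x \mapsto qxz$ of $\refl(G_4)$ added to $v$. Since $\refl(G_4)$ acts by left and right multiplication by elements of $\Phi \subset \Lambda$ and $\Lambda$ is a subring of $\quat$, the additive subgroup $2\cdot\Lambda$ is $\refl(G_4)$-invariant. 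The induction closes, and restricting to those elements with trivial linear part produces $T \subseteq 2\cdot\Lambda$.

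The main obstacle is the orbit computation in the middle paragraph: the eight all-even vectors arise effortlessly from the left multiplications $L_q$, but reaching the odd coset $(1+i+j+k) + 2\Z^4$ inside $2\cdot\Lambda$ requires exhibiting a quaternion with all-odd coordinates in the orbit, and the identity $r_1(2) = 2\zeta^2 = 2\omega$ supplies exactly the needed witness.
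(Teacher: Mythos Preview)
Your argument is correct. The forward inclusion $2\cdot\Lambda \subseteq T$ matches the paper's approach: both start from $t_2 = r_1' r_1^{-1}$ and use the $\refl(G_4)$-action via conjugation to propagate. The paper simply notes that conjugating $t_2$ by all of $\refl(G_4)$ produces $t_q$ for every $q \in 2\cdot\Phi$, and since $\Lambda$ is the $\Z$-span of $\Phi$ this immediately gives all of $2\cdot\Lambda$; your version is the same idea spelled out with explicit orbit witnesses.

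The genuine difference is in the reverse inclusion. The paper argues structurally: having shown that the subgroup generated by $\{t_q : q \in 2\cdot\Phi\}$ is normal (it is preserved by the generators), it observes that modulo this subgroup $r_1'$ becomes $r_1$, so the quotient is generated by the images of $r_1$ and $r_i$ and hence is exactly $\refl(G_4)$; therefore no further translations can exist. You instead run a direct induction on word length, tracking the translation part $v$ through compositions and using that $\Phi\subset\Lambda$ and $\Lambda$ is a subring to see that $2\cdot\Lambda$ is $\refl(G_4)$-invariant. Your route is more hands-on and self-contained, avoiding the splitting/quotient reasoning at the cost of the small bookkeeping step of checking the translation part of $(r_1')^{-1}$; the paper's route is shorter once one accepts the quotient identification, and it makes the semidirect-product structure $\refl(\wt G_4) \cong (2\cdot\Lambda) \rtimes \refl(G_4)$ visible as a byproduct.
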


\begin{proof}
  The element $t_2 = r_1' \circ r_1^{-1}$ is a translation in
  $\refl(\wt G_4)$ and conjugating $t_2$ by elements of $\refl(G_4)$
  shows that all the translations $t_q$ for all $q \in 2 \cdot \Phi$
  are also in $\refl(\wt G_4)$.  Thus $\refl(\wt G_4)$ contains the
  abelian subgroup $T$ that they generate and this consists of all
  translations of the form $2 \cdot \Lambda$.  The subgroup $T$ is
  normal since it is stabilized by the generating set and, because the
  quotient by $T$ is $\refl(G_4)$, the elements in $T$ are the only
  translations in $\refl(\wt G_4)$.
\end{proof}

\begin{fact}[Isolated fixed points]\label{fact:fixed-points}
  There is a copy of $\refl(G_4)$ inside $\refl(\wt G_4)$ fixing a
  point $v$ for each $v$ in the lattice $\Lambda_{D_4}$.  In
  particular, every point in $\Lambda_{D_4}$ is an intersection of
  fixed hyperplanes of complex reflections in $\refl(\wt G_4)$.
\end{fact}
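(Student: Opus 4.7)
The plan is to construct, for each $v\in\Lambda_{D_4}$, an injective homomorphism $\phi_v\colon \refl(G_4)\to \refl(\wt G_4)$ whose image is a copy of $\refl(G_4)$ fixing $v$. The definition is $\phi_v(g)=t_{v-g(v)}\circ g$: the isometry with the same linear part as $g$ but recentered to fix $v$ rather than the origin. A direct calculation gives $\phi_v(g)(v)=v$, and the identity $(g_1g_2-I)(v)=(g_1-I)(v)+g_1((g_2-I)(v))$ quickly shows that $\phi_v$ is a group homomorphism. Injectivity is automatic because $\phi_v(g)$ has linear part $g$.

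The crucial point, and the main obstacle, is to verify that $\phi_v$ actually lands in $\refl(\wt G_4)$. By Fact~\ref{fact:translations}, this requires $v-g(v)\in 2\cdot\Lambda$ for every $g\in\refl(G_4)$ and every $v\in\Lambda_{D_4}$; equivalently, the linear part $\refl(G_4)$ (which plainly preserves both $\Lambda_{D_4}$ and $2\cdot\Lambda$) must act trivially on the quotient $\Lambda_{D_4}/(2\cdot\Lambda)$. From the coordinate descriptions of both lattices this quotient has order $4$, with cosets represented by $\{0,\,1+i,\,1+j,\,1+k\}$. Since $\refl(G_4)=\langle r_1, r_i\rangle$, triviality reduces to a finite verification: for each generator $r\in\{r_1,r_i\}$ and each of the four representatives $v$, compute $r(v)-v$ and confirm it lies in $2\cdot\Lambda$, using the quaternionic formulas $r_1(x)=\zeta x\zeta$ and $r_i(x)=\zeta^i x\zeta$. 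For instance, $r_1(1+k)=-1+k$ yields $r_1(1+k)-(1+k)=-2\in 2\cdot\Lambda$; the remaining cases are similar and constitute the main computational content of the argument.

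With $\phi_v$ in hand, its image is a copy of $\refl(G_4)$ sitting inside $\refl(\wt G_4)$ and fixing $v$, which proves the first assertion. For the ``in particular'' clause, note that $\phi_v$ sends each reflection $r\in\refl(G_4)$ with fixed hyperplane $H$ to a complex euclidean reflection $\phi_v(r)\in\refl(\wt G_4)$ whose fixed hyperplane is the affine translate $v+H$. Since the four fixed hyperplanes $H_1,H_i,H_j,H_k$ in $\refl(G_4)$ meet only at the origin, their translates $v+H_p$ meet only at $v$, exhibiting $v$ as a $0$-dimensional intersection of fixed hyperplanes of complex reflections in $\refl(\wt G_4)$.
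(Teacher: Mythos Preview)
Your proof is correct but follows a different route from the paper's. The paper argues inductively: Example~\ref{ex:1+k} exhibits a copy of $\refl(G_4)$ at $1+k$; conjugating this by elements of the copy at the origin produces copies at every point of $\Phi_{D_4}$; conjugating the copy at the origin by elements of those copies reaches sums of two roots; and iterating covers all finite sums, i.e.\ all of $\Lambda_{D_4}$. Every conjugation is by an element already known to lie in $\refl(\wt G_4)$, so membership is automatic and no appeal to Fact~\ref{fact:translations} is needed.

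Your approach instead conjugates by the external translation $t_v$ (your $\phi_v$ is exactly $g\mapsto t_v\,g\,t_v^{-1}$) and then uses Fact~\ref{fact:translations} to certify that the result lands in $\refl(\wt G_4)$, reducing the question to whether $\refl(G_4)$ acts trivially on the order-$4$ quotient $\Lambda_{D_4}/(2\cdot\Lambda)$. This is a clean, uniform reduction to a finite check and makes transparent why $\Lambda_{D_4}$ is exactly the right lattice. The paper's bootstrap, by contrast, avoids that finite verification and the dependence on Fact~\ref{fact:translations}, staying entirely inside the group. One small point: your parenthetical ``plainly preserves both $\Lambda_{D_4}$ and $2\cdot\Lambda$'' deserves a word of justification---it follows because $\refl(G_4)$ permutes both $\Phi$ and the facet centers $\Phi_{D_4}$ of the $24$-cell.
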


\begin{proof}
  By Example~\ref{ex:1+k} this holds for $v = 1+k$ and if we conjugate
  the copy of $\refl(G_4)$ fixing $1+k$ by an element of the copy fixing
  the origin we find copies fixing $v$ for all $v \in \Phi_{D_4}$.
  Next, conjugating the copy at the origin by elements in the copies
  fixing the points in $\Phi_{D_4}$ shows that there is a copy fixing
  every point that is a sum of two elements in $\Phi_{D_4}$.
  Continuing in this way shows that there is a copy fixing any point
  that is a finite sum of elements in $\Phi_{D_4}$, a set equal to
  $\Lambda_{D_4}$.
\end{proof}

\begin{fact}[Reflections]\label{fact:reflections}
  For every element $v \in \Lambda_{D_4}$ and for every $q \in
  \{1,i,j,k\}$, the primitive complex reflection $t_v \circ r_q \circ
  t_v^{-1}$ is in $\refl(\wt G_4)$.  In fact, these are the only
  primitive complex reflections in $\refl(\wt G_4)$.
\end{fact}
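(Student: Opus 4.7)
The plan is to split the claim into its two halves. For the forward containment, I appeal to Fact~\ref{fact:fixed-points}: the copy of $\refl(G_4)$ fixing $v \in \Lambda_{D_4}$ is unique and must equal the conjugate $t_v\,\refl(G_4)\,t_v^{-1}$, because the linear-part projection $\refl(\wt G_4) \to \refl(G_4)$ sends any such copy isomorphically onto $\refl(G_4)$ (the kernel is trivial since no nonidentity translation fixes $v$, and the cardinalities agree). This conjugate contains $t_v \circ r_q \circ t_v^{-1}$ for each $q \in \{1,i,j,k\}$, and each of these elements is a primitive complex reflection since conjugation by a translation preserves both the order and the rotation angle.

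For the reverse containment, I first note that Fact~\ref{fact:translations} together with the observation that $\refl(G_4)$ is precisely the stabilizer of the origin in $\refl(\wt G_4)$ yields a semidirect product decomposition $\refl(\wt G_4) = T \rtimes \refl(G_4)$ with $T = 2\cdot\Lambda$.  Any primitive reflection $f \in \refl(\wt G_4)$ can thus be written uniquely as $f = t_w \circ L$ with $w \in 2\cdot\Lambda$ and $L \in \refl(G_4)$.  Its linear part $L$ is itself a primitive reflection, and inspecting the definition of $\refl(G_4)$ shows that the only primitive reflections there are the four maps $r_q$ (the elements $r_q^2$ have rotation $\omega^2$, which is not of the form $e^{2\pi i/m}$ for any positive integer $m$).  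Since $f$ has a fixed point, $w$ lies in the image of the map $x \mapsto x - r_q(x)$, which equals the complex line $q\C$, so $w \in 2\cdot\Lambda \cap q\C$.  Realizing $f$ as $t_v \circ r_q \circ t_v^{-1}$ then amounts to finding $v \in \Lambda_{D_4}$ with $v - r_q(v) = w$, i.e.\ proving the lattice identity that the image of $\Lambda_{D_4}$ under $v \mapsto v - r_q(v)$ equals $2\cdot\Lambda \cap q\C$.

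The inclusion $\subseteq$ is automatic from the forward direction.  For $\supseteq$, the conjugation action of $\refl(G_4)$ permutes $\{1\C, i\C, j\C, k\C\}$ transitively and preserves $\Lambda_{D_4}$ setwise, so it suffices to handle $q = 1$.  For $v = a+bi+cj+dk \in \Lambda_{D_4}$, projecting $v$ orthogonally onto $1\C$ and expressing the projection in the basis $\{1,\omega\}$ yields $(a+\tfrac{b+c+d}{3}) + \tfrac{2(b+c+d)}{3}\omega$; multiplying by $1-\omega$ and using $\omega^2 = -1-\omega$ gives $v - r_1(v) = (a+b+c+d) + (-a+b+c+d)\omega$.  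Both coefficients are even integers -- the first because $v \in \Lambda_{D_4}$, the second because it differs from the first by $-2a$ -- and every pair of even integers can be attained.  A parallel description identifies $2\cdot\Lambda \cap 1\C$ with $2\Z + 2\Z\omega$, so the two lattices coincide.  The main obstacle is this concrete lattice matching; everything else is structural bookkeeping.
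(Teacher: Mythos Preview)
Your proof is correct and follows the same two-step strategy as the paper: Fact~\ref{fact:fixed-points} for the forward containment and Fact~\ref{fact:translations} for the reverse.  The paper's argument for the reverse direction is terser---it asserts that a hypothetical extra primitive reflection $r'$ parallel to $r_q$ would make $r' \circ r_q^{-1}$ a translation contradicting Fact~\ref{fact:translations} and calls the verification ``straight-forward''---whereas you explicitly carry out the underlying lattice identity $\{v - r_q(v) : v \in \Lambda_{D_4}\} = 2\cdot\Lambda \cap q\C$ on which that assertion rests; your symmetry reduction to $q=1$ is valid since $\refl(G_4)$ preserves both $\Lambda$ and the norm, and $\Lambda_{D_4}$ is exactly the even-norm part of $\Lambda$.
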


\begin{proof}
  The first assertion is an immediate consequence of
  Fact~\ref{fact:fixed-points}.  When $r' = t_v \circ r_q \circ
  t_v^{-1}$ for some $v$ and for $q\in \{1,i,j,k\}$, we say that $r'$
  is \emph{parallel} to $r_q$.  For the second assertion we note that
  every primitive complex reflection $r'$ in $\refl(\wt G_4)$ must be
  parallel to one of the primitive reflections $r_q$ with $q \in
  \{1,i,j,k\}$ in $\refl(G_4)$.  It is then straight-forward to show
  that if there were an $r'$ in $\refl(\wt G_4)$ parallel to $r_q$
  other than the ones listed, then $r' \circ r_q^{-1}$ would be a
  translation in $\refl(\wt G_4)$ that violates
  Fact~\ref{fact:translations}.
\end{proof}

\begin{fact}[Fixed hyperplanes]
  The fixed hyperplanes of the complex reflections in $\refl(\wt G_4)$
  of the form $t_v \circ r_q \circ t_v^{-1}$ with $v \in
  \Lambda_{D_4}$ and $q \in \{1,i,j,k\}$ can be described as follows
  \[\fix(t_v \circ r_1 \circ t_v^{-1}) = \{ a+bi+cj+dk \mid a=\ell,\
  b+c+d=m\}\] 
  \[\fix(t_v \circ r_i \circ t_v^{-1}) = \{ a+bi+cj+dk \mid b=\ell,\
  a+c-d=m\}\] 
  \[\fix(t_v \circ r_j \circ t_v^{-1}) = \{ a+bi+cj+dk \mid c=\ell,\
  a+d-b=m\}\] 
  \[\fix(t_v \circ r_k \circ t_v^{-1}) = \{ a+bi+cj+dk \mid d=\ell,\
  b+c-a=m\}\] 
  where $\ell$ and $m$ are the unique integers so that $v$ satisfies
  the equations.
\end{fact}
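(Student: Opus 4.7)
The plan is to verify the formulas in two steps: first in the case $v = 0$, and then for general $v$ by translation. For the $v = 0$ case, Example~\ref{ex:1+k} already records $\fix(r_1) = \{a = 0,\ b + c + d = 0\}$. The other three formulas follow from the observation that for each $q \in \{1,i,j,k\}$ the composition $L_q \circ r_1 \circ L_q^{-1}$ equals $r_q$, a direct consequence of the identity $q \zeta q^{-1} = \zeta^q$. In particular, $\fix(r_q) = L_q(\fix(r_1)) = q \cdot \fix(r_1)$, and the two homogeneous linear equations defining $\fix(r_q)$ are obtained from those of $\fix(r_1)$ by pulling back through the real-linear coordinate change $x \mapsto q^{-1} x$, a routine calculation with the quaternion multiplication table. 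As a cross-check, one can instead verify each formula directly by right-multiplying the spanning quaternion for the complex line $q'\C = \fix(r_q)$ (as listed in Remark~\ref{rem:g4-fixed-points}) by $1$ and by $\omega = (-1 + i + j + k)/2$ to obtain an $\R$-basis of that line, and then reading off the two independent linear equations both basis vectors annihilate.

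For the reduction to $v = 0$, a point $x$ is fixed by $t_v \circ r_q \circ t_v^{-1}$ if and only if $r_q(x - v) = x - v$, i.e., $x \in v + \fix(r_q)$. So the two homogeneous equations $\phi(x) = 0$ and $\psi(x) = 0$ defining $\fix(r_q)$ translate to the inhomogeneous equations $\phi(x) = \phi(v)$ and $\psi(x) = \psi(v)$ for $\fix(t_v \circ r_q \circ t_v^{-1})$. Writing $v = a_v + b_v i + c_v j + d_v k$, this produces the constants $\ell = \phi(v)$ and $m = \psi(v)$, which are the unique right-hand sides for which $v$ satisfies the equations; they are automatically integers whenever $v \in \Lambda_{D_4}$, since $\phi$ and $\psi$ have integer coefficients.

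The only real obstacle is the quaternionic bookkeeping in the first step: either one computes the three coordinate changes $x \mapsto q^{-1} x$ for $q \in \{i, j, k\}$, or one computes the three products $q' \omega$ for the three remaining spanning quaternions. Either route is routine given the quaternion multiplication rules together with the formula $\omega = (-1 + i + j + k)/2$; in the end, the proof simply leverages the conjugation symmetries already present inside $\refl(G_4)$ together with the translation action of the lattice $\Lambda_{D_4}$.
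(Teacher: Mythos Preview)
Your proposal is correct and is essentially the paper's approach: the paper's proof reads in full ``Direct computation,'' and what you have written is a clean organization of exactly that computation, using the conjugation identity $r_q = L_q \circ r_1 \circ L_q^{-1}$ to reduce the $v=0$ case to the formula for $\fix(r_1)$ already recorded in Example~\ref{ex:1+k}, and then translating by $v$. There is nothing to add.
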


\begin{proof}
  Direct computation.
\end{proof}

Once the reflections and their fixed hyperplanes have been computed,
it is easy to show that the isolated fixed points listed in
Fact~\ref{fact:fixed-points} are the only points that arise as
intersections of fixed hyperplanes.  This set then determines the
structure of the Voronoi cells.

\begin{fact}[Voronoi cells]
  In the Voronoi cell structure around the set of isolated hyperplane
  intersections for the group $\refl(\wt G_4)$ the Voronoi cell around
  the origin is the standard $24$-cell with vertices $\Phi$, the other
  Voronoi cells are translates of the $24$-cell by vectors in
  $\Lambda_{D_4}$ and the link of each vertex in the Voronoi cell
  structure is a $4$-dimensional cube.
\end{fact}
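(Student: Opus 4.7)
The plan is to extract the Voronoi structure from the set $S$ of isolated hyperplane intersections. Fact~\ref{fact:fixed-points} gives $\Lambda_{D_4}\subseteq S$, and the reverse inclusion follows from the explicit hyperplane equations listed in the preceding Fact: any point forming a $0$-dimensional intersection must satisfy at least four linearly independent equations drawn from those four families, and solving any such system directly forces the coordinates $(a,b,c,d)$ to lie in $\Lambda_{D_4}$. Once $S=\Lambda_{D_4}$ is established, the second clause of the statement is automatic: since $\Lambda_{D_4}$ is a sublattice of $\quat$, translation by any $v\in\Lambda_{D_4}$ permutes $S$ and therefore permutes Voronoi cells, so every Voronoi cell is a translate of the one at the origin.

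For the Voronoi cell $V_0$ around the origin, I would first locate the nearest nonzero lattice points. The minimum norm on $\Lambda_{D_4}\setminus\{0\}$ is $\sqrt{2}$, achieved by the twenty-four elements of $\Phi_{D_4}=(i-j)\cdot\Phi$, and $V_0$ is the intersection of the twenty-four closed half-spaces $\{x:\real(\bar x v)\leq 1\}$ for $v\in\Phi_{D_4}$. I would then verify that this polytope coincides with the $24$-cell having vertex set $\Phi$ by checking that each $w\in\Phi$ lies at distance $1$ from the origin and from exactly six elements of $\Phi_{D_4}$, so sits at the intersection of six facets of $V_0$, while no point beyond these bisectors is equidistant from a larger subset. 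This is the main obstacle, since four-dimensional polytope combinatorics does not bend to the lens technique of Section~\ref{sec:24-cell} and requires careful quaternionic inner-product accounting; however, the incidence count $24\cdot 6 = 6\cdot 24$ already confirms the $24$-cell incidence structure.

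For the vertex link I would fix the representative vertex $w=1\in\Phi$ and enumerate the lattice points at distance $|w|=1$ from $w$: these are $0$, $2$, and $1\pm q$ for $q\in\{i,j,k\}$, a total of eight points forming the vertex set of a $16$-cell, namely the Delaunay cell of $\Lambda_{D_4}$ centered at $w$. Hence eight translates of $V_0$ meet at $w$, and each contributes a cube to the link at $w$ since the vertex figure of the $24$-cell at one of its vertices is a cube. Dualising to the Delaunay $16$-cell, the assembly of these eight cubes into a polyhedral decomposition of $\sph^3$ must be the boundary complex of the polytope dual to the $16$-cell, which is the $4$-cube. Translation then transfers this computation to every vertex of the Voronoi structure, completing the proof.
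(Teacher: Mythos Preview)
Your proposal is correct and, in fact, considerably more detailed than what the paper provides: the paper's entire proof of this Fact is a one-line citation, observing that the Voronoi cells of the $D_4$ root lattice constitute a standard computation and referring to Conway--Sloane, \emph{Sphere packings, lattices and groups}, Section~7.2.  What you have written is essentially a sketch of that standard computation carried out by hand: identifying $S=\Lambda_{D_4}$, cutting out $V_0$ by the twenty-four bisectors of the minimal vectors $\Phi_{D_4}$, matching the resulting polytope with the $24$-cell on $\Phi$, and reading off the vertex link by dualising the Delaunay $16$-cell.

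So there is no genuine methodological difference---you are filling in exactly the argument the paper outsources.  Your version buys self-containment; the paper's version buys brevity and an authoritative reference for the polytope combinatorics you flag as ``the main obstacle.''  If you want your write-up to be airtight, the one place to tighten is the step showing that the twenty-four nearest-neighbour bisectors already suffice to cut out $V_0$ (i.e.\ that no farther lattice point shaves the polytope further); this is true for $D_4$ but is precisely the kind of check one usually delegates to the reference.
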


\begin{proof}
  The Voronoi cells for the $D_4$ root lattice is a standard
  computation.  See \cite[Section~7.2]{CoSl99} for details.
\end{proof}

\begin{fact}[Vertices]\label{fact:vertices}
  The vertices of the Voronoi cell structure are located at the points
  in $\Lambda \setminus \Lambda_{D_4}$ and the group $\refl(\wt G_4)$
  acts transitively on this set.
\end{fact}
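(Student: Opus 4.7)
The plan is to prove the two assertions separately: the vertex set equals $\Lambda \setminus \Lambda_{D_4}$, and $\refl(\wt G_4)$ acts transitively on it.

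For the first assertion, I would combine the preceding Voronoi-cells fact with simple coset bookkeeping. Each Voronoi cell is a translate $v + P$ of the origin-centered $24$-cell $P$ by some $v \in \Lambda_{D_4}$, so the full vertex set is the set sum $\Lambda_{D_4} + \Phi$. Since $\Lambda$ is a subring of $\quat$ and $\Phi \cap \Lambda_{D_4}$ is empty (the eight integer elements of $\Phi$ have odd coordinate sum and the sixteen half-integer ones are not integer quaternions at all), we obtain $\Lambda_{D_4} + \Phi \subseteq \Lambda \setminus \Lambda_{D_4}$. For the reverse inclusion, since $[\Lambda : \Lambda_{D_4}] = 4$, it suffices to exhibit representatives of each of the three non-trivial cosets inside $\Phi$, and $1$, $\zeta$, and $\omega$ do the job (for the cosets of integers with odd sum, half-integers with even sum, and half-integers with odd sum, respectively).

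For transitivity, I would show that the orbit of $1 \in \Phi$ under $\refl(\wt G_4)$ already fills $\Lambda \setminus \Lambda_{D_4}$. By Remark~\ref{rem:binary-tetra}, $\refl(G_4)$ acts freely and hence transitively on the $24$-element set $\Phi$; by Fact~\ref{fact:translations}, the translations in $\refl(\wt G_4)$ form $\{t_v : v \in 2 \cdot \Lambda\}$, so the orbit contains $\Phi + 2 \cdot \Lambda$ and it remains to verify $\Phi + 2 \cdot \Lambda = \Lambda \setminus \Lambda_{D_4}$. Using the tower $2 \cdot \Lambda \subset \Lambda_{D_4} \subset \Lambda$ with each an index-$4$ subgroup of the next, the quotient $\Lambda / 2 \cdot \Lambda$ has order $16$ and $\Lambda_{D_4} / 2 \cdot \Lambda$ has order $4$, so $\Lambda \setminus \Lambda_{D_4}$ comprises exactly $12$ cosets of $2 \cdot \Lambda$. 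The $24$ elements of $\Phi$ pair into $12$ antipodal pairs $\{\pm q\}$, each pair lying in a single coset since $2q \in 2 \cdot \Lambda$, so if distinct pairs represent distinct cosets then $\Phi$ meets all $12$ cosets outside $\Lambda_{D_4}$, giving $\Phi + 2 \cdot \Lambda = \Lambda \setminus \Lambda_{D_4}$.

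The main obstacle, though not deep, is the final parity verification that distinct antipodal pairs in $\Phi$ lie in distinct cosets of $2 \cdot \Lambda$, equivalently that $p - q \notin 2 \cdot \Lambda$ whenever $p \neq \pm q$ in $\Phi$. I would handle this in three cases, using that $2 \cdot \Lambda$ consists of integer quaternions with all coordinates of the same parity: if both $p$ and $q$ are integer then $p - q$ has exactly two nonzero entries, so its coordinate parities are not uniform; if exactly one is half-integer then $p - q$ is not even an integer quaternion; and if both are half-integer then the coordinates of $p - q$ all lie in $\{-1, 0, 1\}$, and membership in $2 \cdot \Lambda$ forces them to be uniformly even (all $0$, giving $p = q$) or uniformly odd (all $\pm 1$, giving $p = -q$).
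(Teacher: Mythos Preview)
Your argument is correct.  For the first assertion (vertex set equals $\Lambda\setminus\Lambda_{D_4}$) you do exactly what the paper does, only spelling out the coset bookkeeping more explicitly: both proofs identify the vertex set with $\Lambda_{D_4}+\Phi$ and then observe that $\Phi$ meets every nontrivial coset of $\Lambda_{D_4}$ in $\Lambda$.

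For transitivity your route is genuinely different from the paper's.  The paper argues geometrically: the $1$-skeleton of the Voronoi structure is connected, every edge lies in the boundary of some $24$-cell, and by Fact~\ref{fact:fixed-points} there is a local copy of $\refl(G_4)$ fixing the center of that cell which acts transitively on its $24$ vertices; chaining these local transitivities along edges gives global transitivity.  You instead work purely algebraically, using only the single copy of $\refl(G_4)$ at the origin together with the translation subgroup $2\cdot\Lambda$, and then a coset count in $\Lambda/2\cdot\Lambda$ (with the parity check on antipodal pairs in $\Phi$) to see that $\Phi+2\cdot\Lambda$ already exhausts $\Lambda\setminus\Lambda_{D_4}$.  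The paper's argument is shorter and avoids the case analysis, but it leans on Fact~\ref{fact:fixed-points} and on connectedness of the $1$-skeleton; your argument is more self-contained, needing only Fact~\ref{fact:translations} and the free action from Remark~\ref{rem:binary-tetra}, at the cost of the explicit parity verification.
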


\begin{proof}
  Since the translates of the $24$-cell are centered at the elements
  of $\Lambda_{D_4}$, every vertex of the Voronoi cell structure can
  be described as $u + v$ with $u \in \Phi$ and $v \in \Lambda_{D_4}$.
  After noting that $\Lambda$ contains $\Lambda_{D_4}$ as a
  sublattice, it is easy to check that every element of $\Lambda$ that
  is not in $\Lambda_{D_4}$ differs from an element of $\Lambda_{D_4}$
  by an element in $\Phi$.  To see transitivity, note that the
  $1$-skeleton of the Voronoi cell structure is connected, each edge
  is in the boundary of one of the $24$-cells, and the local copy of
  $\refl(G_4)$ fixing each $24$-cell acts transitively on its vertices.
\end{proof}

The following key fact is another easy computation.

\begin{fact}[Intersections]\label{fact:intersections}
  If the fixed hyperplane $H$ of a complex reflection in the group
  $\refl(\wt G_4)$ non-trivially intersects is one of the closed
  Voronoi cells, then $H$ contains the point at the center of that
  Voronoi cell.
\end{fact}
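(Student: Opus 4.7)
The plan is to reduce the geometric claim to a finite case analysis on integer parameters. First I would use that every closed Voronoi cell is a translate $P+w$ of the standard $24$-cell $P$ at the origin with $w\in \Lambda_{D_4}$, so shifting by $-w$ replaces $H$ by $H':=H-w$ and reduces the claim to: if $H'\cap P\ne\emptyset$, then $0\in H'$. By Fact~\ref{fact:reflections} and the preceding fact, a hyperplane in the $r_1$-family is cut out by $\{a=\ell,\ b+c+d=m\}$ with $\ell=v_1$ and $m=v_2+v_3+v_4$ for some $v\in\Lambda_{D_4}$, so $\ell+m$ is even. Translating gives $H'=\{a=\ell',\ b+c+d=m'\}$ where $\ell'=\ell-w_1$ and $m'=m-(w_2+w_3+w_4)$, and since $w\in\Lambda_{D_4}$ the integer $\ell'+m'$ is again even.

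Next I would extract two bounds valid on all of $P$. Since $P$ is inscribed in the unit sphere of $\quat$, $|a|\le 1$. The three facet inequalities $b+c\le 1$, $c+d\le 1$, $b+d\le 1$ coming from the facet normals $(i+j)/2$, $(j+k)/2$, $(i+k)/2$ sum to $b+c+d\le \tfrac{3}{2}$, and the opposite normals give the symmetric lower bound, so $|b+c+d|\le \tfrac{3}{2}$ on $P$. Integrality of $\ell'$ and $m'$ then forces $\ell',m'\in\{-1,0,1\}$, and the parity constraint $\ell'+m'\in 2\Z$ leaves only $(0,0)$ and the four pairs $(\pm 1,\pm 1)$.

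To eliminate the $(\pm 1,\pm 1)$ cases I would use that the slice $P\cap\{a=\pm 1\}$ is the single vertex $(\pm 1,0,0,0)$, where $b+c+d=0$, so $H'\cap P$ can be nonempty only if $m'=0$. That leaves $(\ell',m')=(0,0)$, which puts the origin on $H'$ and hence $w$ on $H$. The other three reflection families are handled by the identical argument applied to their analogous integer equations, the bounds transferring because the isometries of $P$ that permute the four complex lines $\{1\C,i\C,j\C,k\C\}$ carry one family to another. I do not expect any serious obstacle; the real content is that the facet-derived bound $\tfrac{3}{2}<2$ together with the parity constraint rules out every nonzero even value of $m'$, which is exactly what keeps the isolated hyperplane intersections of the previous fact from spilling into the interiors of neighboring Voronoi cells.
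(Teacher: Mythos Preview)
Your argument is correct and complete: the translation reduction, the parity constraint $\ell'+m'\in 2\Z$ coming from $\Lambda_{D_4}$, the bounds $|a|\le 1$ and $|b+c+d|\le \tfrac{3}{2}$ from the facet inequalities of the $24$-cell, and the observation that the slice $\{a=\pm 1\}\cap P$ is a single vertex all combine exactly as you describe to force $(\ell',m')=(0,0)$; the other three families follow identically. The paper itself offers no argument beyond calling this ``another easy computation,'' so your write-up is a clean explicit realization of the direct computation the authors had in mind.
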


\section{Proofs of Main Theorems\label{sec:theorems}}

In this section we prove our three main results.  We begin by defining
the complement complex~$K$.

\begin{defn}[Complement complex $K$]
  The \emph{complement complex $K$} is the portion of the Voronoi cell
  structure for the group $\refl(\wt G_4)$ that is disjoint from the
  union of the fixed hyperplanes of its complex reflections.  By
  Fact~\ref{fact:intersections}, around each fixed hyperplane
  intersection point the portion of $K$ in the boundary of this
  particular $24$-cell is a copy of the $2$-complex $K_0$ defined in
  Definition~\ref{def:K0}.  Thus $K$ can be viewed as a union of local
  copies of~$K_0$.
\end{defn}

The vertex links in $K$ are isomorphic to a well-known graph.

\begin{defn}[M\"obius-Kantor graphs]
  The link of a vertex in the complement complex $K$ is the portion of
  the $1$-skeleton of the $4$-cube shown in
  Figure~\ref{fig:moebius-kantor}. This is a $16$ vertex $3$-regular
  graph known as the \emph{M\"obius-Kantor graph}.  The $8$ removed
  edges correspond to the equilateral triangles whose center lies in
  one of the fixed hyperplanes.  The portion of this graph that lives
  in one of the eight $3$-cubes in the $4$-cube is the subdivided
  theta graph that is the link of this vertex inside the corresponding
  copy of $K_0$ inside a particular $24$-cell.
\end{defn}

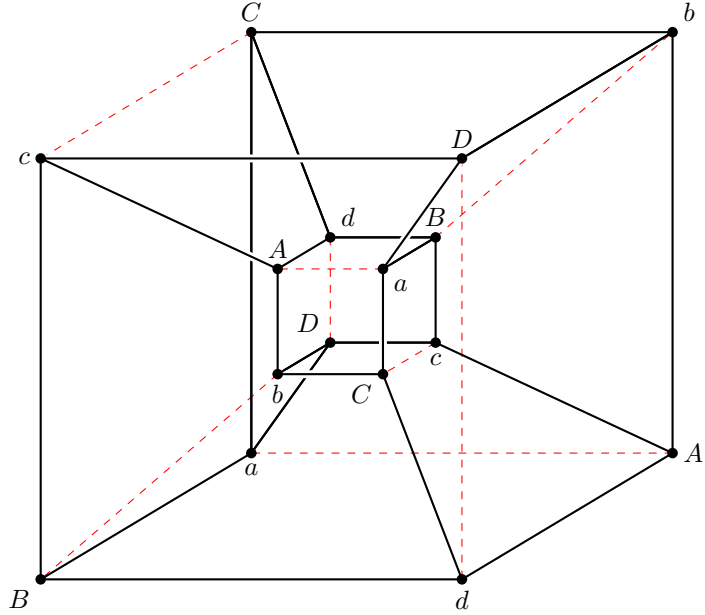
\begin{figure}
  \begin{tikzpicture}[scale=.7,z={(-.5cm,-.3cm)}]
    \def\r{4}
    \coordinate (1) at (1,1,1);
    \coordinate (2) at (1,1,-1);
    \coordinate (3) at (1,-1,-1);
    \coordinate (4) at (1,-1,1);
    \coordinate (5) at (-1,1,1);
    \coordinate (6) at (-1,1,-1);
    \coordinate (7) at (-1,-1,-1);
    \coordinate (8) at (-1,-1,1);
    \coordinate (9) at (\r,\r,\r);
    \coordinate (10) at (\r,\r,-\r);
    \coordinate (11) at (\r,-\r,-\r);
    \coordinate (12) at (\r,-\r,\r);
    \coordinate (13) at (-\r,\r,\r);
    \coordinate (14) at (-\r,\r,-\r);
    \coordinate (15) at (-\r,-\r,-\r);
    \coordinate (16) at (-\r,-\r,\r);
    \draw[red,thin, dashed] (1)--(5) (2)--(10) (3)--(4) (6)--(7)
    (8)--(16) (9)--(12) (11)--(15) (13)--(14);
    \draw[thick] (2)--(6)--(14)--(15)--(7)--(3);
    \draw[thick] (4)--(1)--(2)--(3)
    (7)--(8)--(5)--(6)
    (9)--(10)--(11)--(12)
    (14)--(15)--(16)--(13)
    (5)--(13)--(9)--(1)
    (2)--(6)--(14)--(10)
    (11)--(3)--(7)--(15)
    (16)--(12)--(4)--(8);
    \draw[line width=3pt, color=white] (8)--(4)--(1)--(9)--(13)--(5);
    \draw[thick,black] (8)--(4)--(1)--(9)--(13)--(5) (9)--(10)
    (1)--(2) (7)--(8);
    \foreach \x in {1,2,...,16} {\fill (\x) circle (1mm);}
    \draw (1) node[below right] {\figsize $a$};
    \draw (2) node[above] {\figsize $B$};
    \draw (3) node[below] {\figsize $c$};
    \draw (4) node[below left] {\figsize $C$};
    \draw (5) node[above] {\figsize $A$};
    \draw (6) node[above right] {\figsize $d$};
    \draw (7) node[above left] {\figsize $D$};
    \draw (8) node[below] {\figsize $b$};
    \draw (9) node[above] {\figsize $D$};
    \draw (10) node[above right] {\figsize $b$};
    \draw (11) node[right] {\figsize $A$};
    \draw (12) node[below] {\figsize $d$};
    \draw (13) node[left] {\figsize $c$};
    \draw (14) node[above] {\figsize $C$};
    \draw (15) node[below] {\figsize $a$};
    \draw (16) node[below left] {\figsize $B$};
  \end{tikzpicture}
  \caption{The M\"obius-Kantor graph as a subgraph of the $1$-skeleton
    of a $4$-cube with $8$ edges removed.\label{fig:moebius-kantor}}
\end{figure}

At this point, the proof of our first main theorem is
straight-forward.

\renewcommand{\themain}{\ref{main:complement}}
\begin{main}[Complement complex]
  The hyperplane complement of $\refl(\wt G_4)$ deformation retracts
  onto a non-positively curved piecewise euclidean $2$-complex $K$ in
  which every $2$-cell is an equilateral triangle and every vertex
  link is a M\"obius-Kantor graph.
\end{main}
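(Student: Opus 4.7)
The plan is to mirror the proof of Theorem~\ref{thm:K0} but carry it out cell by cell in the global Voronoi decomposition. The complement of the hyperplane arrangement is covered by the closed $24$-cells centred at the points of $\Lambda_{D_4}$, and by Fact~\ref{fact:intersections} every fixed hyperplane meeting a given Voronoi cell passes through its centre. Hence inside each such $24$-cell the configuration of fixed hyperplanes is, up to translation, exactly the configuration for $\refl(G_4)$ studied in Section~\ref{sec:G4}.

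First I would perform a radial deformation retraction inside each closed Voronoi cell. Within a single $24$-cell $P_v$ centred at $v \in \Lambda_{D_4}$, the interior retracts radially onto its boundary away from the missing centre $v$, and then the proof of Theorem~\ref{thm:K0} retracts that boundary, with its four pierced hexagons, onto a translated copy $K_v$ of the $2$-complex $K_0$. Both steps fix the $2$-faces of the Voronoi cell pointwise and depend only on the local hyperplane picture inside $P_v$, so they agree on faces shared by adjacent Voronoi cells. Gluing these local retractions yields a global deformation retraction of the hyperplane complement onto the union $K$ of the $K_v$, which is automatically a piecewise euclidean $2$-complex whose $2$-cells are the equilateral triangles inherited from the boundaries of the $24$-cells.

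Next I would identify the link of an arbitrary vertex $w$ of $K$. By Fact~\ref{fact:vertices} such a $w$ lies in $\Lambda \setminus \Lambda_{D_4}$ and is shared by the eight $24$-cells whose centres are the nearest neighbours of $w$ in $\Lambda_{D_4}$, so its combinatorial link in the Voronoi cell structure is the $1$-skeleton of a $4$-cube with one $3$-cube facet for each incident $24$-cell. Inside each facet the local contribution to the link of $w$ in $K_v$ is the subdivided theta graph produced in the proof of Theorem~\ref{thm:K0}; this corresponds to deleting one edge of that $3$-cube, namely the edge dual to the unique triangle of $P_v$ at $w$ whose centre lies on a fixed hyperplane. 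Assembling the eight local links removes eight edges in total from the $1$-skeleton of the $4$-cube and, as recorded in Figure~\ref{fig:moebius-kantor}, the resulting graph is the M\"obius-Kantor graph.

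Finally, non-positive curvature follows from Remark~\ref{rem:npc}: each edge of the M\"obius-Kantor link has length $\frac{\pi}{3}$, the interior angle of an equilateral triangle, and the M\"obius-Kantor graph is known to have girth $6$, so every simple loop in a vertex link has length at least $6 \cdot \frac{\pi}{3} = 2\pi$ and the link is $\cat(1)$. The step I expect to be the main obstacle is the second one: verifying that the eight local subdivided theta graphs, when identified along their shared squares in the link $4$-cube, delete exactly the eight edges pictured in Figure~\ref{fig:moebius-kantor} rather than some other eight-edge subset. This requires tracking which triangle of each incident $24$-cell is destroyed by which hyperplane using the explicit fixed hyperplane formulae from Fact~\ref{fact:reflections} together with the transitive action of $\refl(\wt G_4)$ on $\Lambda \setminus \Lambda_{D_4}$ from Fact~\ref{fact:vertices}.
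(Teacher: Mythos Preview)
Your proposal is correct and follows essentially the same route as the paper: perform the radial retraction in each Voronoi $24$-cell using Fact~\ref{fact:intersections}, observe that the secondary retractions on punctured triangles and skewered octahedra are compatible across adjacent cells, and then deduce non-positive curvature from the girth-$6$ property of the M\"obius-Kantor link with edge length $\frac{\pi}{3}$. The one point you flag as an obstacle---that the eight deleted edges really yield the M\"obius-Kantor graph---is exactly what the paper packages into the definition preceding the theorem and Figure~\ref{fig:moebius-kantor}, so your instinct to verify it via the explicit hyperplane equations and vertex transitivity is the right way to close that gap.
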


\begin{proof}
  The proof is essentially the same as that of Theorem~\ref{thm:K0}
  but with the local deformations combined into a global deformation.
  The first step is to radially deformation retract from the removed
  isolated fixed point at the center of each Voronoi cell to its
  boundary, which can be carried out because of
  Fact~\ref{fact:intersections}.  Next, the secondary deformations
  applied to the punctured equilateral triangles and the skewered
  octahedra are compatible regardless of which Voronoi cell one views
  them as belonging to.  Finally, every edge in every vertex link has
  length $\frac{\pi}{3}$ and since M\"obius-Kantor graphs have no
  simple cycles of combinatorial length less than $6$, there are no
  simple loops of length less than $2\pi$, the vertex links are
  $\cat(1)$ and $K$ is non-positively curved.
\end{proof}

\begin{rem}[Other examples]\label{rem:other}
  We should note that when we have attempted to extend our main
  theorems to other complex euclidean reflections groups acting on
  $\C^2$, it is the analog of Fact~\ref{fact:intersections} where
  those attempts have failed.  It is apparently quite common for a
  fixed hyperplane to intersect the boundary of a Voronoi cell without
  passing through its center.  Unless this intersection happens to be
  contained in a different fixed hyperplane that does pass through the
  center of the Voronoi cell, this missing boundary prevents the
  initial deformation retraction onto a portion of the $3$-skeleton of
  the Voronoi cell structure.  
\end{rem}

We now prove a stronger result that immediately implies
Theorem~\ref{main:fixed-points}.

\begin{thm}[Isolated fixed points]
  The points in $\quat_\omega$ stabilized by a non-trivial element of
  the group $\refl(\wt G_4)$ are those in the union of the fixed
  hyperplanes of its complex reflections together with all of the
  vertices of the complement complex $K$.
\end{thm}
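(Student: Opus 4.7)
The plan is to exploit the semidirect product structure $\refl(\wt G_4) = T \rtimes \refl(G_4)$, where $T = \{t_w : w \in 2\Lambda\}$ by Fact~\ref{fact:translations} and the copy of $\refl(G_4)$ fixing the origin (Fact~\ref{fact:fixed-points}) provides the splitting. Every element of $\refl(\wt G_4)$ has a unique form $g = t_w \bar g$ with $w \in 2\Lambda$ and $\bar g \in \refl(G_4)$, and $g(v) = v$ becomes $w = (I - \bar g)(v)$. The analysis splits into three cases by the linear part $\bar g$. If $\bar g = \id$, then $w = 0$, so no non-trivial translation has a fixed point. If $\bar g$ is one of the eight reflections $r_q$ or $r_q^2$ with $q \in \{1,i,j,k\}$, the fixed set of $g$ is either empty or an affine translate of the fixed complex line of $\bar g$, placing $v$ on a fixed hyperplane. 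In the remaining case, $\bar g$ fixes only the origin, so $I - \bar g$ is invertible and $g$ has the unique fixed point $v = (I - \bar g)^{-1}(w)$.

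The entire theorem thus reduces to the lattice claim that $(I - \bar g)^{-1}(2\Lambda) \subseteq \Lambda$, equivalently $(I - \bar g)(\Lambda) \supseteq 2\Lambda$, for each of the fifteen non-identity, non-reflection elements $\bar g \in \refl(G_4)$. Combined with the converse observation that every $v \in \Lambda$ is fixed by the element $t_{2v} \circ (-\id) \in \refl(\wt G_4)$ (since $2v \in 2\Lambda$ and $-\id = -L_1 \in \refl(G_4)$), this shows that the isolated fixed points outside the hyperplane arrangement are exactly the elements of $\Lambda \setminus \Lambda_{D_4}$, which by Fact~\ref{fact:vertices} is the vertex set of $K$.

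The lattice claim is then verified type-by-type. For $\bar g = -\id$ one has $I - \bar g = 2I$ and the inclusion is an equality. For $\bar g = \pm L_q$ with $q \in \{i,j,k\}$, one has $(I - \bar g)(x) = (1 \mp q)x$, and the identity $(1 \mp q)(1 \pm q) = 1 - q^2 = 2$ combined with the ring structure of $\Lambda$ gives $2\lambda = (1 \mp q)\bigl((1 \pm q)\lambda\bigr) \in (1 \mp q)\Lambda$ for every $\lambda \in \Lambda$. The main obstacle, and the only step requiring an explicit quaternionic computation, is the case $\bar g = -r_q^{\pm 1}$ (eight elements). Conjugation $-r_q = L_q(-r_1)L_q^{-1}$ together with $L_q \Lambda = \Lambda$ reduces these to $q = 1$, and the identity $(I + r_1^{-1}) = r_1^{-1}(I + r_1)$ (using $r_1^2 = r_1^{-1}$) further reduces $-r_1^2$ to $-r_1$. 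The remaining inclusion $(I + r_1)(\Lambda) \supseteq 2\Lambda$ is verified by solving $v + \zeta v \zeta = w$ for $\Lambda$-preimages of the generators $\{2, 2i, 2j, 2\zeta\}$ of $2\Lambda$; a direct quaternionic calculation gives $v = 2 - 2\zeta,\ 1+i,\ 1+j,\ 2$ respectively, all of which lie in $\Lambda$.
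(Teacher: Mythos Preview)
Your argument is correct, but it takes a more computational route than the paper's.  Both proofs begin identically: write $g = t_w\,\bar g$ with $w\in 2\Lambda$ and $\bar g\in\refl(G_4)$, dispose of the identity and reflection cases, and for $\bar g=-\id$ obtain the fixed-point set $\Lambda$ from $2\cdot FP_{-\id}=T_0=2\Lambda$.  The divergence is in how the remaining fourteen linear parts are handled.  You verify the lattice inclusion $(I-\bar g)^{-1}(2\Lambda)\subseteq\Lambda$ type by type, using the ring structure of $\Lambda$ for $\pm L_q$ and an explicit quaternionic calculation for $-r_q^{\pm1}$.  The paper instead makes a single group-theoretic observation: each of $\pm L_q$ (with $q\in\{i,j,k\}$) squares to $-\id$, and each of $-r_q$, $-r_q^2$ cubes to $-\id$, so any point fixed by $g$ is automatically fixed by a power of $g$ whose linear part is the antipodal map, hence lies in $FP_{-\id}=\Lambda$.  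This collapses all fifteen cases to the one already computed and avoids your lattice verifications entirely.  Your approach has the minor advantage of exhibiting explicit preimages and making the role of the Hurwitz ring structure visible, but the paper's ``power-to-antipodal'' trick is considerably shorter.
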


\begin{proof}
  Let $T$ be the set of translations in $\refl(\wt G_4)$, let $T_0$ the
  images of the origin under the translations in $T$ and let $FP_A$ be
  the set of points fixed by some element in $\refl(\wt G_4)$ whose
  linear part is the antipodal map.  As in
  Remark~\ref{rem:fixed-points}, the simplification $-(x-v)+v = -x+2v$
  shows that $2\cdot FP_A = T_0$.  Since $T_0 = 2\cdot \Lambda$, $FP_A
  = \Lambda$ and by Fact~\ref{fact:vertices} there is an element of
  order $2$ fixing each vertex of the complement complex $K$.  Since
  the remaining points in $\Lambda$ are contained in the fixed
  hyperplanes (Fact~\ref{fact:fixed-points}), all points fixed by an
  element whose linear part is the antipodal map have been accounted
  for.  To see that the vertices of $K$ are the only isolated points
  with non-trivial stabilizers, suppose that $x$ is a point with a
  non-trivial stabilizer $s$.  If $x$ does not lie in fixed
  hyperplane, the linear part of $s$ must be something other than a
  complex reflection.  The possibilities for its linear part are the
  antipodal map $L_{-1}$, $\pm L_q$ with $q \in \{i,j,k\}$ or $-r_q$
  or $-r_q^2$ with $q \in \{1,i,j,k\}$ but all of these have a power
  equal to the antipodal map $L_{-1}$: the second power of $\pm L_q$
  is the antipodal map and the third power of $-r_q$ and of $-r_q^2$
  is the antipodal map.  In particular, $x$ must be stabilized by an
  element whose linear part is the antipodal map and thus it is one of
  the ones already identified.
\end{proof}

Since the braid group of a group action is defined as the fundamental
group of the space of regular orbits, and the vertices of $K$ are not
regular points, the complement complex $K$ needs to be modified before
it can be used to investigate the group $\braid(\wt G_4)$.

\begin{defn}[Modified complement complex $K'$]\label{def:modified}
  Let $K_1$ be the union of the complement complex $K$ and the
  set of small closed balls of radius $\epsilon>0$ centered at each of
  the vertices of $K$.  Next, let $K_2$ be the metric space obtained
  by removing from $K_1$ the points corresponding to the vertices of
  $K$.  Finally, let $K'$ be the space obtained by removing from $K_1$
  the open balls of radius $\epsilon$ centered at each of the vertices
  of $K$. We call $K'$ the \emph{modified complement complex}.
\end{defn}

In the same way that $K$ is homotopy equivalent to the hyperplane
complement, $K'$ is homotopy equivalent to the space of regular
points.

\begin{prop}[Homotopy equivalences]
  The spaces $K$, $K_1$ and the hyperplane complement are homotopy
  equivalent as are the spaces $K'$, $K_2$ and the space of regular
  points.
\end{prop}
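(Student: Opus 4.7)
My plan is to verify two chains of homotopy equivalences: hyperplane complement $\simeq K_1 \simeq K$ and space of regular points $\simeq K_2 \simeq K'$. Theorem~\ref{main:complement} already supplies $K \simeq$ hyperplane complement, so three further equivalences remain.

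The two equivalences sitting inside each triple are the easy ones. For $K \simeq K_1$, observe that $K_1$ is obtained from $K$ by attaching, at each vertex $v$, a closed euclidean $4$-ball $B_\epsilon(v)$ along its intersection with $K$, which is the cone $v * L_v$ on the vertex link $L_v$. Since both the attached ball and the attaching cone are contractible, and the inclusion $v * L_v \hookrightarrow B_\epsilon(v)$ is a cofibration, the gluing lemma for cofibrations yields $K \simeq K_1$. For $K' \simeq K_2$, each punctured ball $B_\epsilon^\circ(v) \setminus \{v\}$ strong deformation retracts radially outward onto its boundary sphere $S_\epsilon(v) \subset K'$, giving an explicit deformation retraction of $K_2$ onto $K'$.

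The delicate step is the equivalence between the space of regular points and $K_2$. Writing $X$ for the hyperplane complement and $V$ for the vertex set of $K$, one has space of regular points $= X \setminus V$ and $K_2 = K_1 \setminus V$. The plan is to construct a deformation retraction of $X$ onto $K_1$ that is the identity on each closed ball $B_\epsilon(v)$ throughout the homotopy and that never sends a point outside $V$ into $V$. Such a retraction then restricts to a deformation retraction of $X \setminus V$ onto $K_1 \setminus V = K_2$, completing the chain. To build it I would run the sequence of radial retractions used in the proof of Theorem~\ref{main:complement} but halt each ray the moment it first meets $K_1$. By Fact~\ref{fact:intersections} the first-hit point lies either on the Voronoi boundary (already in $K$) or on some $S_\epsilon(v)$ (in $K_1$); in particular the ray from a Voronoi center toward a vertex $v$ halts on $S_\epsilon(v)$ strictly before reaching $v$, so the points of $V$ are never struck from outside.

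The main obstacle is the continuity of this stop-at-$K_1$ modification, since a hard first-hit rule can be discontinuous at rays tangent to some sphere $S_\epsilon(v)$. The plan is to soften the stopping rule using a bump function supported in a small neighborhood of the tangency locus, or equivalently to interpolate continuously between the Theorem~\ref{main:complement} retraction and the identity in such a way that the homotopy is the identity on each $B_\epsilon(v)$ while still retracting onto $K_1$ elsewhere. Either construction is technical but routine given the explicit polytopal structure of the Voronoi cells.
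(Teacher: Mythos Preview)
Your approach is essentially the same as the paper's: it too declares $K\simeq K_1$ and $K_2\simeq K'$ to be the easy steps, and for the remaining equivalence it modifies the deformation retraction of Theorem~\ref{main:complement} by ``stopping the retraction whenever a point is distance~$\epsilon$ from a vertex,'' then restricts to the space of regular points. The paper does not spell out the continuity issue you flag at rays tangent to some $S_\epsilon(v)$; your observation that this needs a word of justification, and your proposed fix via a softened stopping rule, are more scrupulous than the paper but not a different strategy.
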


\begin{proof}
  It should be clear that $K$ and $K_1$ are homotopy equivalent as are
  $K_2$ and $K'$.  Moreover, the deformation retractions used to show
  that the hyperplane complement deformation retracts to $K$ can be
  modified to show that it deformation retracts to $K_1$ instead by
  simply stopping the retraction whenever a point is distance
  $\epsilon$ from a vertex.  This modified deformation retraction can
  then be combined with the radial deformation retraction from $K_2$
  to $K'$ to show that the space of regular points (which removes the
  fixed hyperplanes and the vertices of $K$) is homotopy equivalent to
  the modified complex $K'$.
\end{proof}

The action of $\refl(\wt G_4)$ on $K'$ is now free and the fundamental
group of the quotient is, by definition, the group $\braid(\wt G_4)$.

\begin{defn}[Quotient complex]
  Let $G = \refl(\wt G_4)$. Although the action of $G$ on $K$ is not
  free, we can still investigate the properties of the orbifold
  quotient.  Since the action is proper and cellular with trivial
  stabilizers for every cell of positive dimension, the quotient
  remains a $2$-complex.  In this case, the quotient $K/G$ has one
  vertex, four edges and four triangles and it corresponds to the
  presentation $2$-complex of the presentation $\langle a,b,c,d \mid
  abd, bcd, cad, cba \rangle$.  The group defined by this presentation
  is the binary tetrahedral group and the universal cover of the
  orbifold quotient is the $2$-skeleton of the $24$-cell.  Note that
  selecting any $3$ of the $4$ relations produces an infinite group
  isomorphic to the $3$-strand braid group.  The \emph{modified
    quotient complex $K'/G$} is the quotient of $K'$ by the free
  action of $\refl(\wt G_4)$.  To see its structure consider $K_1/G$
  and $K_2/G$.  The former is a modification of $K/G$ where the
  neighborhood of the unique vertex becomes a cone on an $\R P^2$ with
  the vertex as its cone point, and the latter is this space with the
  cone point removed.  Thus $K'/G$ is a copy of $K/G$ with a
  neighborhood of the vertex removed and a real projective plane
  attached in its place.
\end{defn}

The universal cover of the quotient $K'/G$ is the same as the
universal cover of $K'$ and because $2$-spheres are simply connected,
the universal cover of $K'$ is essentially a modified version of the
universal cover of $K$, where the modifications around each vertex are
locally identical to the ones described in
Definition~\ref{def:modified}.  As a consequence we have the following
result.

\begin{thm}[Universal cover]
  The group $\braid(\wt G_4)$ acts geometrically on $\wt K$, the
  $\cat(0)$ universal cover of the complement complex $K$ and the
  vertex stabilizers have size $2$.
\end{thm}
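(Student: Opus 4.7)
The plan is to establish the three claims—that $\wt K$ is $\cat(0)$, that $\braid(\wt G_4)$ acts geometrically on it, and that vertex stabilizers have size $2$—in that order, using the orbifold fundamental group of $K/G$ (where $G := \refl(\wt G_4)$) as a bridge between the braid group and the geometric setting.

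First I would invoke Cartan-Hadamard. By Theorem~\ref{main:complement}, $K$ is a complete non-positively curved piecewise euclidean $2$-complex, so its simply connected universal cover $\wt K$ is a complete $\cat(0)$ space. This disposes of the $\cat(0)$ claim.

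Next I would produce a geometric action on $\wt K$ by lifting the $G$-action through the orbifold structure on $K/G$. The group $G$ acts on $K$ cellularly, cocompactly (with finite quotient of one vertex, four edges, and four triangles), by isometries, with trivial stabilizers on all cells of positive dimension, and with $\Z/2$ vertex stabilizers by Theorem~\ref{main:fixed-points}. Lifting this action to $\wt K$ and combining with the deck transformations of $\wt K \to K$ produces a geometric action of an extension group $\Gamma$ fitting into
$$1 \to \pi_1(K) \to \Gamma \to G \to 1,$$
in which $\Gamma$ is the orbifold fundamental group of $K/G$. By construction, the $\Gamma$-action on $\wt K$ is properly discontinuous, cocompact (with quotient $K/G$), and isometric, and the stabilizer of any lift of a vertex $v$ maps isomorphically onto the $\Z/2$ stabilizer of $v$ downstairs, which will give the vertex-stabilizer assertion once $\Gamma$ is identified with the braid group.

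The last step is the identification $\Gamma \cong \braid(\wt G_4)$. By the preceding homotopy-equivalence proposition, $\braid(\wt G_4) = \pi_1(K'/G)$. Following the paper's description of the modified quotient, $K'/G$ is obtained from $K/G$ by replacing a neighborhood of the cone vertex with a real projective plane, and its universal cover $\wt{K'}$ is a modified $\wt K$ in which each vertex is blown up into a simply connected $2$-sphere. Since these $2$-spheres are simply connected, collapsing them gives a $\braid(\wt G_4)$-equivariant homotopy equivalence $\wt{K'} \to \wt K$; this transfers the free geometric deck action of $\braid(\wt G_4) = \pi_1(K'/G)$ on $\wt{K'}$ to a geometric action on $\wt K$, and simultaneously identifies $\braid(\wt G_4)$ with $\Gamma$. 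The $\Z/2$ vertex stabilizers then come from the deck subgroups corresponding to the collapsed $2$-spheres, matching the orbifold computation above. The main obstacle is this identification step—verifying that the blown-up cover $\wt{K'}$ really does collapse $\braid$-equivariantly onto $\wt K$, with the new vertex stabilizers being exactly the $\Z/2$'s coming from the $\R P^2$'s—since the rest of the argument (Cartan-Hadamard, lifting the $G$-action, reading off stabilizers of lifts) follows standard patterns once this is in place.
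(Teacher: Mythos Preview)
Your proposal is correct and hinges on the same key observation as the paper: because the $2$-spheres introduced at the vertices in the modified cover are simply connected, $\wt{K'}$ is a blown-up version of $\wt K$, so the free deck action of $\braid(\wt G_4)$ on $\wt{K'}$ collapses to a geometric action on $\wt K$ with $\Z/2$ vertex stabilizers.

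The organizational difference is that the paper works directly---starting from the deck action on $\wt{K'}$, collapsing spheres, and reading off the quotient as $K/G$---whereas you take a detour through the orbifold fundamental group $\Gamma$ of $K/G$, first constructing its action on $\wt K$ by lifting $G$ and then separately identifying $\Gamma$ with $\braid(\wt G_4)$ via the same sphere-collapse. Your route makes the short exact sequence $1 \to \pi_1(K) \to \Gamma \to G \to 1$ explicit and cleanly separates the geometric construction from the group identification, at the cost of doing the work twice (once to build $\Gamma$, once to match it with the braid group); the paper's route is shorter but leaves the extension structure implicit. Either way the substantive content is the sphere-collapse step you correctly flag as the main obstacle.
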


\begin{proof}
  There is a natural free and isometric action of $\braid(\wt G_4)$ on
  $\wt K'$, the universal cover of $K'$ by deck transformations, which
  leads to a proper isometric action of $\braid(\wt G_4)$ on $\wt K$,
  the $\cat(0)$ universal cover of $K$.  The only non-trivial
  stabilizers are, of course, order $2$ and they only occur at the
  vertices of $\wt K$.  Finally, the action is cocompact because the
  quotient of $\wt K$ by the action of $\braid(\wt G_4)$ is equal to
  the quotient of $K$ by $\refl(\wt G_4)$ which is a compact
  $2$-complex with one vertex, four edges and four triangles.
\end{proof}

And this proves our third main result.

\renewcommand{\themain}{\ref{main:braid-group}}
\begin{main}[Braid group]
  The group $\braid(\wt G_4)$ is a $\cat(0)$ group and it
  contains elements of order~$2$.
\end{main}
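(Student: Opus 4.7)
The proof should be essentially a direct reading of the Universal cover theorem just established, so my plan is to verify the two assertions separately, each following from the Universal cover theorem together with the definition of a $\cat(0)$ group recalled in Remark~\ref{rem:npc}.

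First, for the $\cat(0)$ group assertion, I would note that Theorem~\ref{main:complement} produces a complete non-positively curved piecewise euclidean $2$-complex $K$, so by the standard Cartan--Hadamard theorem for non-positively curved complexes its universal cover $\wt K$ is a complete $\cat(0)$ space. The Universal cover theorem provides a properly discontinuous cocompact isometric action of $\braid(\wt G_4)$ on $\wt K$, which is precisely the definition of a $\cat(0)$ group given in Remark~\ref{rem:npc}. This half of the theorem therefore reduces to unpacking the definition.

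Second, for the $2$-torsion assertion, it suffices to exhibit a single non-trivial element of order~$2$. The Universal cover theorem records that each vertex stabilizer of the action on $\wt K$ has size~$2$, so choosing any vertex $\tilde v$ of $\wt K$ and any non-identity element of its stabilizer gives an element of order~$2$ in $\braid(\wt G_4)$. The existence of these non-trivial stabilizers is ultimately traceable to Theorem~\ref{main:fixed-points}: each vertex of $K$ is an isolated fixed point in the hyperplane complement, stabilized by an isometry with antipodal linear part, and in the modified complement complex $K'$ a neighborhood of such a vertex is replaced by a real projective plane whose fundamental group $\Z/2\Z$ contributes exactly this $2$-torsion to $\braid(\wt G_4)$.

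The main obstacle in this line of argument is not local; everything hinges on having the Universal cover theorem and its two inputs already in hand. Those inputs are Theorem~\ref{main:complement} (which produces $\wt K$ as a $\cat(0)$ space supporting a geometric action of the braid group) and a careful analysis of the modified complement complex $K'$ relative to $K$ (which identifies the isolated fixed points of $\refl(\wt G_4)$ acting on $\C^2$ with the non-trivial finite vertex stabilizers that survive in $\braid(\wt G_4)$). Once those are accepted, Theorem~\ref{main:braid-group} reduces to the bookkeeping described above.
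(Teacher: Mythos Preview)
Your proposal is correct and follows exactly the paper's approach: the paper simply states that the Universal cover theorem immediately yields Theorem~\ref{main:braid-group}, since a geometric action on the $\cat(0)$ space $\wt K$ makes $\braid(\wt G_4)$ a $\cat(0)$ group, and the size-$2$ vertex stabilizers supply the $2$-torsion. Your additional remarks tracing the ingredients back to Theorems~\ref{main:complement} and~\ref{main:fixed-points} and the modified complement complex are accurate elaborations of what the paper leaves implicit.
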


\providecommand{\bysame}{\leavevmode\hbox to3em{\hrulefill}\thinspace}
\providecommand{\MR}{\relax\ifhmode\unskip\space\fi MR }
\providecommand{\MRhref}[2]{%
  \href{http://www.ams.org/mathscinet-getitem?mr=#1}{#2}
}
\providecommand{\href}[2]{#2}


\begin{thebibliography}{BMR98}

\bibitem[Ban76]{Ban76}
Etsuko Bannai, \emph{Fundamental groups of the spaces of regular orbits of the
  finite unitary reflection groups of dimension 2}, Journal of the Mathematical
  Society of Japan \textbf{28} (1976), no.~3, 447--454.

\bibitem[Bes15]{Be15}
David Bessis, \emph{Finite complex reflection arrangements are {$K(\pi,1)$}},
  Ann. of Math. (2) \textbf{181} (2015), no.~3, 809--904. \MR{3296817}

\bibitem[BMR95]{BMR95}
Michel Brou{\'e}, Gunter Malle, and Rapha{\"e}l Rouquier, \emph{On complex
  reflection groups and their associated braid groups}, Canadian {{Mathematical
  Society}} (Providence), {Amer. Math. Soc.}, 1995, pp.~1--13.

\bibitem[BMR98]{BMR98}
\bysame, \emph{Complex reflection groups, braid groups, {{Hecke}} algebras},
  Journal fur die Reine und Angewandte Mathematik \textbf{500} (1998),
  127--190.

\bibitem[BZ92]{BjZi92}
Anders Bj{\"o}rner and G{\"u}nter~M. Ziegler, \emph{Combinatorial
  stratification of complex arrangements}, J. Amer. Math. Soc. \textbf{5}
  (1992), no.~1, 105--149. \MR{1119198 (92k:52022)}

\bibitem[Coh76]{Co76}
Arjeh~M. Cohen, \emph{Finite complex reflection groups}, Ann. Sci. \'Ecole
  Norm. Sup. (4) \textbf{9} (1976), no.~3, 379--436. \MR{0422448}

\bibitem[CS99]{CoSl99}
J.~H. Conway and N.~J.~A. Sloane, \emph{Sphere packings, lattices and groups},
  third ed., Grundlehren der Mathematischen Wissenschaften [Fundamental
  Principles of Mathematical Sciences], vol. 290, Springer-Verlag, New York,
  1999, With additional contributions by E. Bannai, R. E. Borcherds, J. Leech,
  S. P. Norton, A. M. Odlyzko, R. A. Parker, L. Queen and B. B. Venkov.
  \MR{1662447}

\bibitem[CS03]{CoSm03}
John~H. Conway and Derek~A. Smith, \emph{On quaternions and octonions: their
  geometry, arithmetic, and symmetry}, A K Peters, Ltd., Natick, MA, 2003.
  \MR{1957212 (2004a:17002)}

\bibitem[GM06]{GoMa06}
Victor Goryunov and Show~Han Man, \emph{The complex crystallographic groups and
  symmetries of {$J_{10}$}}, Singularity theory and its applications, Adv.
  Stud. Pure Math., vol.~43, Math. Soc. Japan, Tokyo, 2006, pp.~55--72.
  \MR{2313408}

\bibitem[GP12]{GoPa12}
Eddy Godelle and Luis Paris, \emph{Basic questions on {A}rtin-{T}its groups},
  Configuration spaces, CRM Series, vol.~14, Ed. Norm., Pisa, 2012,
  pp.~299--311. \MR{3203644}

\bibitem[Leh04]{Le04}
G.~I. Lehrer, \emph{A new proof of {S}teinberg's fixed-point theorem}, Int.
  Math. Res. Not. (2004), no.~28, 1407--1411. \MR{2052515 (2004m:20076)}

\bibitem[LT09]{LeTa09}
Gustav~I. Lehrer and Donald~E. Taylor, \emph{Unitary reflection groups},
  Australian Mathematical Society Lecture Series, vol.~20, Cambridge University
  Press, Cambridge, 2009. \MR{2542964 (2010j:20056)}

\bibitem[MS]{McSu-artin-euclid}
Jon McCammond and Robert Sulway, \emph{Artin groups of euclidean type}, To
  appear in \textit{Inventiones Mathematicae}.

\bibitem[Pop82]{Po82}
V.~L. Popov, \emph{Discrete complex reflection groups}, Communications of the
  Mathematical Institute, Rijksuniversiteit Utrecht, vol.~15, Rijksuniversiteit
  Utrecht, Mathematical Institute, Utrecht, 1982. \MR{645542}

\bibitem[Sal87]{Sa87}
M.~Salvetti, \emph{Topology of the complement of real hyperplanes in {${\bf
  C}^N$}}, Invent. Math. \textbf{88} (1987), no.~3, 603--618. \MR{884802
  (88k:32038)}

\bibitem[ST54]{ShTo54}
G.~C. Shephard and J.~A. Todd, \emph{Finite unitary reflection groups},
  Canadian J. Math. \textbf{6} (1954), 274--304. \MR{0059914}

\bibitem[Ste64]{St64}
Robert Steinberg, \emph{Differential equations invariant under finite
  reflection groups}, Trans. Amer. Math. Soc. \textbf{112} (1964), 392--400.
  \MR{0167535 (29 \#4807)}

\end{thebibliography}
\end{document}